\documentclass[12pt]{amsart}
\usepackage{amsmath,amssymb,amsthm}
\newcommand{\re}{\mathbb{R}}
\newcommand{\co}{\mathbb{C}}

\newcommand{\na}{\mathbb{N}}

\newcommand{\cc}{\mathcal{C}}
\newcommand{\SSS}{\mathcal{S}}
\newcommand{\z}{\bar z}
\newcommand{\w}{\bar w}
\newcommand{\rp}{\mbox{Re}}
\newcommand{\ip}{\mbox{Im}}
\newcommand{\srp}{\mbox{\rm \scriptsize{Re}}}
\newcommand{\sip}{\mbox{\rm \scriptsize{Im}}}

\long\def\symbolfootnote[#1]#2{\begingroup%
\def\thefootnote{\fnsymbol{footnote}}\footnote[#1]{#2}\endgroup}
\newcommand{\boxx}{\rule{2.12mm}{3.43mm}}

\numberwithin{equation}{section}

\title[Continuous solutions of Cauchy-Riemann equations]{Continuous
  solutions of nonlinear Cauchy-Riemann equations and
  pseudoholomorphic curves in normal coordinates}

\author[Adam Coffman]{Adam Coffman}
\address{Department of Mathematical Sciences \\ Indiana University -
  Purdue University Fort Wayne \\ 2101 E.\ Coliseum Blvd. \\ Fort
  Wayne, IN, USA 46805-1499}
\email{CoffmanA@ipfw.edu}
\urladdr{http://www.ipfw.edu/math/}

\author[Yifei Pan]{Yifei Pan}
\address{College of Mathematics and Information Sciences \\ Jiangxi Normal University \\ Nanchang, P.R.China}
\email{Pan@ipfw.edu}

\author[Yuan Zhang]{Yuan Zhang}
\address{Department of Mathematical Sciences \\ Indiana University -
  Purdue University Fort Wayne \\ 2101 E.\ Coliseum Blvd. \\ Fort
  Wayne, IN, USA 46805-1499}
\email{ZhangYu@ipfw.edu}

\newtheorem{thm}{Theorem}[section]
\newtheorem{prop}[thm]{Proposition}
\newtheorem{lem}[thm]{Lemma}
\newtheorem{cor}[thm]{Corollary}

\theoremstyle{definition}

\newtheorem{notation}[thm]{Notation}
\newtheorem{example}[thm]{Example}

\theoremstyle{remark}

\newtheorem{rem}[thm]{Remark}

\begin{document}

\begin{abstract}
  We establish elliptic regularity for nonlinear, inhomogeneous
  Cauchy-Riemann equations under weak assumptions, and give a
  counterexample in a borderline case.  In some cases where the
  inhomogeneous term has a separable factorization, the solution set
  can be explicitly calculated.  The methods also give local
  parametric formulas for pseudoholomorphic curves with respect to
  some continuous almost complex structures.
\end{abstract}

\maketitle

\section{Introduction}\symbolfootnote[0]{MSC 2010: 35J46; 30G20, 32Q65.}

We consider the nonlinear, inhomogeneous Cauchy-Riemann equation: for
open sets $\Omega_1,\Omega_2\subseteq\co$ and a function
$u:\Omega_1\to\Omega_2$, the equation is
\begin{equation}\label{eq29}
 \frac{\partial u}{\partial\z}=E(z,u).
\end{equation}

Section \ref{sec2} starts with the linear case, Theorem \ref{lem4.3},
establishing some regularity of solutions $u$ under minimal
assumptions: $u$ is continuous, the partial derivatives $u_x$ and
$u_y$ (and the LHS $\frac12(u_x+iu_y)$) exist except possibly on some
small set, and the linear equation $\frac{\partial
u}{\partial\z}=P(z)$ holds almost everywhere for $P\in L^p_{loc}$,
$p\ge2$.  An analogue in the homogeneous case is the Looman-Menchoff
Theorem, that a continuous, but not necessarily $\cc^1$, function with
zero $\z$-derivative must be analytic.  Regularity of $u$ satisfying
the nonlinear equation (\ref{eq29}) then follows in some corollaries
of Theorem \ref{lem4.3}.  In Section \ref{sec4} we give a new example
of a differentiable function $u$ satisfying $\partial
u/\partial\z=P(z)$, where $P$ is continuous on $\co$ but $\partial
u/\partial z$ is not.

In Section \ref{sec1}, we consider the ``separable'' case of the
nonlinear Cauchy-Riemann equation where the RHS of (\ref{eq29})
factors in the form $E(z,u)=f(u)g(z)$ with $f$ holomorphic.  We state
a local existence result in a special case (Theorem \ref{lem4.6}),
but our main goal in Section \ref{sec1} is to explicitly compute local
formulas for solutions $u$ without strong {\sl a priori} assumptions
on the regularity of $u$.

In Section \ref{sec3}, we apply the results of Sections \ref{sec2} and
\ref{sec1} to find formulas for all the $J$-holomorphic curves in
certain coordinate charts in some almost complex $4$-manifolds.
Example \ref{ex4.3} uses the counterexample from Section \ref{sec4} to
show that a continuous almost complex structure can admit a
$J$-holomorphic curve which is differentiable but not $\cc^1$.

\section{Nonlinear Cauchy-Riemann equations}\label{sec2}

\begin{notation}
  For $z=x+iy\in\Omega\subseteq\co=\re^2$, and a function
  $u:\Omega\to\co$, $u_x=\frac{\partial u}{\partial x}$ and
  $u_y=\frac{\partial u}{\partial y}$ are the complex valued pointwise
  partial derivatives with respect to the real coordinates.  If both
  $u_x$ and $u_y$ exist at a point, then $u_{z}=\frac{\partial
    u}{\partial z}=\frac12(u_x-iu_y)$ and $u_{\z}=\frac{\partial
    u}{\partial\z}=\frac12(u_x+iu_y)$ are the pointwise $z$- and
  $\z$-derivatives.  The distributional $\z$-derivative of $u$ on
  $\Omega$ (and similarly for $z$) is the operator, denoted by
  $\partial_{\z} u$, which maps compactly supported smooth test
  functions $\varphi\in\cc^{\infty}_0(\Omega)$ to
  $-\int_{\Omega}u\frac{\partial\varphi}{\partial\z}$.  We say that
  $\partial_{\z}u$ is represented on $\Omega$ by a function $r$ to
  mean that
  $-\int_{\Omega}u\frac{\partial\varphi}{\partial\z}=\int_{\Omega}r\varphi$.
\end{notation}
A distributional derivative represented by $r$ on a domain behaves as
expected under restriction: if $\Omega_2$ is an open subset of
$\Omega_1$, and $\partial_{\z}u$ is represented on $\Omega_1$ by $r$,
then $\partial_{\z}(u|_{\Omega_2})$ is represented on $\Omega_2$ by
$r|_{\Omega_2}$.
\begin{notation}
  Let $R\Subset\Omega$ denote that $R$ is a bounded, open rectangle of
  the form $(a_1,b_1)\times(a_2,b_2)$, with closure $\overline{R}$
  contained in the open set $\Omega\subseteq\co$.  Let $\partial R$
  denote the boundary of $R$.
\end{notation}
Usually, Green's Theorem is stated with a $\cc^1$ or $W^{1,1}$
hypothesis (\cite{aim} Theorem 2.9.1).  However, in a situation where
the partial derivatives exist but may not all be integrable, the
following version of Green's Theorem due to Cohen (\cite{cohenthesis},
\cite{cohen}, \cite{cv}, \cite{gm} Theorem 8) applies.

\begin{prop}\label{lem4.2}
  Suppose $v:\Omega\to\co$ is continuous and satisfies the following
  condition:
  \begin{itemize}
    \item[$(*)$] The partial derivatives $v_x$, $v_y$ exist at every
  point in $\Omega$ except for countably many.
  \end{itemize}
  Then, for any $R\Subset\Omega$, if $\frac{\partial v}{\partial\z}\in
  L^1(R)$, then
  $$\int_{\partial R}v(z)dz=2i\int_R\frac{\partial
  v}{\partial\z}dxdy.$$ \boxx
\end{prop}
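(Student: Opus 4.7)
The plan is to parametrize the boundary of $R=(a_1,b_1)\times(a_2,b_2)$ as four oriented line segments, evaluate each boundary integral by a slice-by-slice fundamental theorem of calculus, and then reassemble the total via Fubini. Writing $dz=dx+i\,dy$ and tracing $\partial R$ counterclockwise gives
\[
\int_{\partial R}v\,dz \;=\; -\int_{a_1}^{b_1}\bigl[v(x,b_2)-v(x,a_2)\bigr]dx \;+\; i\int_{a_2}^{b_2}\bigl[v(b_1,y)-v(a_1,y)\bigr]dy.
\]
The countable exceptional set from $(*)$ projects to a countable subset of each coordinate axis, so for all but countably many $x\in(a_1,b_1)$ the vertical slice $\{x\}\times[a_2,b_2]$ is disjoint from the exceptional set; hence $y\mapsto v(x,y)$ is continuous on the slice and differentiable at every interior point, and the analogous statement holds for horizontal slices.

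The essential one-variable tool I would invoke is a strengthened fundamental theorem of calculus of Denjoy--Goldowsky--Tolstov type: a continuous function $\phi\colon[c,d]\to\co$ that is differentiable off a countable subset of $(c,d)$ and whose pointwise derivative lies in $L^1([c,d])$ satisfies $\phi(d)-\phi(c) = \int_c^d\phi'(t)\,dt$. One proves this by showing $\phi$ is absolutely continuous: cover the countable bad set by open intervals of arbitrarily small total length, use the continuity of $\phi$ to control the oscillation across them, and apply the standard absolute-continuity estimate for the Lebesgue integral of $\phi'$ on the complementary good part.

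Applying this lemma on every good slice replaces the bracketed differences by $\int_{a_2}^{b_2}v_y(x,y)\,dy$ and $\int_{a_1}^{b_1}v_x(x,y)\,dx$ respectively. Interchanging the order of integration by Fubini, using the hypothesis $v_{\bar z}\in L^1(R)$ to supply the necessary integrability on almost every slice, reassembles
\[
\int_{\partial R}v\,dz \;=\; -\int_R v_y\,dxdy + i\int_R v_x\,dxdy \;=\; i\int_R (v_x + iv_y)\,dxdy \;=\; 2i\int_R v_{\bar z}\,dxdy.
\]
The main obstacle is the one-variable FTC above: the classical versions assume the function is $\cc^1$, absolutely continuous, or differentiable everywhere, and extending the conclusion to a merely countable exceptional set requires a careful saltus-style argument at the bad points. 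Once that lemma is in hand, the two-dimensional assembly is routine Fubini bookkeeping.
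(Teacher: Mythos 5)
The paper states this Proposition without proof, citing Cohen, Cuf\'\i--Verdera, and Gray--Morris, so there is no internal argument to compare against. But the Remark that immediately follows it flags exactly the obstruction your proposal runs into: the hypothesis deliberately ``omits any assumption about the integrability or continuity of the individual partial derivatives $v_x$, $v_y$, or $v_z$.''

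Your one-variable lemma is correct: a continuous $\phi$ on $[c,d]$, differentiable off a countable subset, with $\phi'\in L^1$, is absolutely continuous and obeys the fundamental theorem of calculus. The gap is in the two-dimensional reassembly. To invoke that lemma on a vertical slice you need $y\mapsto v_y(x,y)\in L^1(a_2,b_2)$, and on a horizontal slice you need $x\mapsto v_x(x,y)\in L^1(a_1,b_1)$; the only integrability hypothesis available is $\tfrac12(v_x+iv_y)\in L^1(R)$. Since $v$ is complex-valued there can be cancellation between $v_x$ and $iv_y$, so integrability of the combination does not force $v_x$ and $v_y$ to be individually integrable, and Fubini cannot be applied to $\int_R v_y\,dxdy$ or $\int_R v_x\,dxdy$ separately --- your closing display already treats these as absolutely convergent integrals, which is not available under the stated hypotheses. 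Establishing $v_x,v_y\in L^1(R)$ from the hypotheses would be a substantial extra claim; it is closely related to the $W^{1,2}$ conclusion of Theorem~\ref{lem4.3}, which however takes this Proposition as input (and in any case assumes $L^p$, $p\ge 2$, not $L^1$), so one cannot borrow it without circularity. Cohen's proof, and the account in Cuf\'\i--Verdera, deliberately avoids decoupling $v_x$ from $v_y$: it is a Goursat-style bisection and covering argument showing that the rectangle function $R\mapsto \int_{\partial R}v\,dz-2i\int_R v_{\bar z}\,dxdy$ vanishes, with the error on a small rectangle controlled by pointwise differentiability of $v$ at a good point together with absolute continuity of the Lebesgue integral of the single combination $v_{\bar z}$, and with the countably many bad points handled using only continuity of $v$. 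That is precisely what makes the Proposition stronger than the routine slice-and-Fubini form of Green's theorem, and it is the step your proposal is missing.
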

\begin{rem} 
  The statement of Proposition \ref{lem4.2} can be generalized to
  shapes other than rectangles, and the condition $(*)$ can be
  weakened to allow a larger exceptional set: see \cite{cv}.  The
  property $(*)$ can also be assumed to hold only on one particular
  rectangle $R$, but the above formulation is more convenient for us,
  and as a practical matter, the condition $(*)$ on the classical
  derivatives is something more easily checked than properties of
  distributional derivatives.  The main significance of the
  Proposition is that its hypothesis omits any assumption about the
  integrability or continuity of the individual partial derivatives
  $v_x$, $v_y$, or $v_z$.  We also remark that the integrand on the
  RHS is the pointwise derivative (where it exists), not the
  distributional derivative.
\end{rem}
  Cohen's proof was motivated by the earlier Looman-Menchoff Theorem,
  which we recall here from (\cite{n}, \cite{gm} Theorem 11) as a
  Proposition, to be used in Section \ref{sec1}.
\begin{prop}\label{prop2.4}
  Suppose $v:\Omega\to\co$ is continuous and satisfies condition
  {\rm{$(*)$}}.  If \begin{equation}\label{eq94} \frac{\partial
  v}{\partial\z}=0
  \end{equation}
  almost everywhere in $\Omega$, then $v$ is holomorphic on $\Omega$.
  \boxx
\end{prop}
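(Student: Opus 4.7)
The plan is to reduce Proposition \ref{prop2.4} to a Morera-type argument, using Proposition \ref{lem4.2} as the essential bridge. The goal is to show that the contour integral of $v$ around every closed rectangle $\partial R$ in $\Omega$ vanishes, and then to quote the rectangle version of Morera's theorem to conclude that the continuous function $v$ is holomorphic.

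First, I would fix an arbitrary rectangle $R\Subset\Omega$. Since $v_{\z}=0$ almost everywhere on $\Omega$, the pointwise $\z$-derivative (defined off a countable set by condition $(*)$) agrees almost everywhere with the zero function on $R$, and in particular lies in $L^1(R)$. Together with the continuity of $v$ and condition $(*)$, this verifies all the hypotheses of Proposition \ref{lem4.2} on $R$. Applying that Proposition gives
$$\int_{\partial R}v(z)\,dz=2i\int_R\frac{\partial v}{\partial\z}\,dx\,dy=0.$$

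Next, I would invoke the rectangle form of Morera's theorem: if $v$ is continuous on an open set $\Omega\subseteq\co$ and $\int_{\partial R}v(z)\,dz=0$ for every $R\Subset\Omega$, then $v$ is holomorphic on $\Omega$. This is standard: one works on a disk inside $\Omega$, constructs a primitive $V$ by integrating $v$ along axis-parallel paths from a basepoint, and uses the rectangle-vanishing to prove path-independence; continuity of $v$ then yields $V_z=v$ and $V_{\z}=0$, making $V$ holomorphic, hence $v=V'$ is holomorphic as well. Since holomorphicity is a local property, this conclusion on every disk in $\Omega$ yields the global result.

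The main obstacle — in fact the entire content of the Looman--Menchoff theorem — is hidden in Proposition \ref{lem4.2}: extracting Cauchy's integral formula from the hypothesis that $v_x$ and $v_y$ exist only off a countable set, with no \emph{a priori} integrability or continuity of the partials, is the delicate step. Once Cohen's version of Green's theorem is taken as given, the remaining argument is essentially a one-line application of Morera. No refinement of condition $(*)$ is needed, because Proposition \ref{lem4.2} already tolerates a countable exceptional set and requires only that $v_{\z}$ belong to $L^1(R)$, a condition trivially supplied by the hypothesis $v_{\z}=0$ a.e.
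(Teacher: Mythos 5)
Your proof is correct. Note, however, that the paper does not actually prove Proposition~\ref{prop2.4}: it states it as a known result, citing Narasimhan and Gray--Morris, with the box appearing immediately after the statement. So there is no ``paper's proof'' to compare against in the usual sense. What you have done is supply a proof that the paper declines to give, and you have done so in a way that is consistent with the paper's own framework: once Proposition~\ref{lem4.2} (Cohen's form of Green's theorem) is in hand, the homogeneous case $v_{\z}=0$ a.e.\ is exactly the borderline $L^1$ situation it tolerates, so $\int_{\partial R}v\,dz=0$ for every $R\Subset\Omega$, and Morera's theorem for rectangles finishes the argument. One point worth making explicit: this does not introduce a circularity. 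Although the paper remarks that Cohen's proof of Proposition~\ref{lem4.2} was \emph{motivated by} the Looman--Menchoff theorem, Cohen's argument does not use Looman--Menchoff as an input, so deriving Looman--Menchoff as a corollary of Cohen's Green's theorem, as you do, is legitimate and is in fact the modern streamlined route. The net effect of your argument, compared with the paper's citation, is to make the treatment self-contained modulo Proposition~\ref{lem4.2}.
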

The following Theorem considers an inhomogeneous, linear version of
(\ref{eq94}).  In the following Proof, some steps are similar to steps
in \cite{bbc} \S2 and \cite{cv}, and the last two paragraphs recall
well-known regularity methods, but we give enough details to show
exactly where Proposition \ref{lem4.2} is used to establish the
necessary integration by parts.

\begin{thm}\label{lem4.3}
  Suppose $u:\Omega_1\to\Omega_2$ is continuous, satisfies
  {\rm{$(*)$}}, and there is a function $P:\Omega_1\to\co$ so that
  $P\in L^p_{loc}(\Omega_1)$ for some $p$, $2\le p<\infty$, and
  $$\frac{\partial u}{\partial\z}=P(z)$$ almost everywhere.  Then, for
  any $R\Subset\Omega_1$, $u|_R\in W^{1,2}(R)$.  If, further, $p>2$,
  then for $\alpha=1-\frac2p$, $u|_R\in\cc^{0,\alpha}(R)$.
\end{thm}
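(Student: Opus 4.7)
The plan is to promote the pointwise a.e.\ identity $u_{\z}=P$ to the distributional identity $\partial_{\z}u=P$; once that is done, classical Cauchy-transform theory will supply the stated Sobolev and H\"older regularity.

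First, I would fix $\varphi\in\cc^\infty_0(\Omega_1)$ and choose a rectangle $R\Subset\Omega_1$ containing $\mathop{\mathrm{supp}}\varphi$, and apply Proposition \ref{lem4.2} to $v=u\varphi$. Since $u$ is continuous and $\varphi$ smooth, $v$ is continuous and inherits condition $(*)$ from $u$; at every point where the partials of $u$ exist, the product rule gives $v_{\z}=\varphi u_{\z}+u\varphi_{\z}$, which equals $\varphi P+u\varphi_{\z}$ a.e.\ on $R$. This sum lies in $L^1(R)$ because $u$ is bounded on $\overline R$, $\varphi$ is smooth, and $P\in L^p_{loc}\subseteq L^1_{loc}$. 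Since $\varphi$ vanishes on $\partial R$, Proposition \ref{lem4.2} yields $0=\int_R v_{\z}\,dxdy=\int_R\varphi P+\int_R u\varphi_{\z}$, which is exactly the statement that $\partial_{\z}u$ is represented on $\Omega_1$ by $P$.

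Next I would fix $R\Subset\Omega_1$, choose an intermediate $R\Subset R'\Subset\Omega_1$, and set $\widetilde P=P\chi_{R'}\in L^p(\co)$. The Cauchy transform $T\widetilde P(z)=-\frac1\pi\int_\co(\zeta-z)^{-1}\widetilde P(\zeta)\,dA(\zeta)$ satisfies $\partial_{\z}(T\widetilde P)=\widetilde P$ distributionally, so $w:=u-T\widetilde P$ has zero distributional $\z$-derivative on $R'$. Weyl's lemma then gives $w=h$ a.e.\ on $R'$ for some function $h$ holomorphic on $R'$, hence $u=h+T\widetilde P$ on $R'$, and the regularity of $u$ on $R$ reduces to that of $T\widetilde P$. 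The Calder\'on--Zygmund boundedness of the Beurling transform on $L^p$ for $1<p<\infty$, which identifies $\partial_z(T\widetilde P)$ with an $L^p$ function, together with $\partial_{\z}(T\widetilde P)=\widetilde P\in L^p$, places $T\widetilde P\in W^{1,p}(R)\subseteq W^{1,2}(R)$. When $p>2$, either Morrey's embedding or a direct splitting of the Cauchy kernel integral into near and far pieces followed by H\"older's inequality yields $T\widetilde P\in\cc^{0,1-2/p}(R)$, giving the corresponding H\"older bound on $u$.

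The only delicate step is the first: I must extract distributional information from the pointwise a.e.\ equation with no a priori integrability of the individual partials $u_x,u_y$. This is precisely what Proposition \ref{lem4.2} is designed for, since its condition $(*)$ replaces the usual $\cc^1$ or $W^{1,1}$ hypothesis needed to justify integration by parts. Once $\partial_{\z}u=P$ holds distributionally, the remaining work is a textbook application of the Cauchy kernel and the $L^p$-theory of singular integrals.
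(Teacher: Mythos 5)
Your proof is correct and follows essentially the same route as the paper: apply Cohen's Green's Theorem (Proposition \ref{lem4.2}) to $u\varphi$ to upgrade the pointwise a.e.\ equation to a distributional one, then use the Cauchy transform of a localized cutoff of $P$, Weyl's Lemma, and the Beurling transform to get $W^{1,2}$, with the Cauchy transform's H\"older regularity handling the $p>2$ case. The only cosmetic difference is that you establish the distributional identity $\partial_{\bar z}u=P$ directly (by integrating $\partial_{\bar z}(u\varphi)$ over a rectangle containing $\mathop{\mathrm{supp}}\varphi$) and then localize, whereas the paper localizes first and folds the Green's Theorem step into showing $\partial_{\bar z}(u_1-\mathcal{C}(P_2)|_{R_1})=0$; the underlying argument is the same.
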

\begin{proof}
  The restriction $u|_R$ is continuous and bounded on $R$, and an
  element of $L^2(R)$.  The following argument uses the assumption on
  the classical pointwise derivatives to draw this conclusion about
  the distributional derivatives: $u|_R\in W^{1,2}(R)$, meaning that
  its distributional derivatives on $R$, $\partial_{\z}(u|_R)$ and
  $\partial_z(u|_R)$, are represented by functions in $L^2(R)$.

  By compactness, there is a larger rectangle with $R\Subset
  R_1\Subset\Omega_1$.  Let $u_1$ and $P_1$ be the restrictions of $u$
  and $P$ to $R_1$, so $P_1:R_1\to\co$ satisfies
  \begin{equation}\label{eq87}
    \left.\frac{\partial u}{\partial\z}\right|_{R_1}=_{a.e.}P_1\in
    L^p(R_1)\subseteq L^2(R_1).
  \end{equation}
  For a test function $\varphi\in\cc_0^\infty(R_1)$, the
  product $u_1\varphi$ satisfies, for all $z$ except in some set of
  measure $0$ (which includes the exceptional set from $(*)$),
  \begin{eqnarray}
    \frac{\partial}{\partial\z}(u_1(z)\varphi(z))&=&\left(\frac{\partial}{\partial\z}u_1\right)\varphi(z)+u_1(z)\left(\frac{\partial}{\partial\z}\varphi\right)\nonumber\\
    &=_{a.e.}&P_1(z)\varphi(z)+u_1(z)\left(\frac{\partial}{\partial\z}\varphi\right).\label{eq23}
  \end{eqnarray} 
  We emphasize that Equations (\ref{eq87}) and (\ref{eq23}) are a.e.\
  equalities of functions, not equalities of distributions.  $P_1$ and
  the RHS of (\ref{eq23}) are defined for all $z\in R_1$, while
  $u_{\z}$ and the LHS of (\ref{eq23}) may be undefined for some $z$
  in a set of measure $0$.  Because the two functions differ only on a
  set of measure $0$ and
  $P_1\varphi+u_1\frac{\partial\varphi}{\partial\z}\in
  L^p(R_1)\subseteq L^1(R_1)$,
  $\frac{\partial}{\partial\z}(u_1\varphi)$ is also in $L^1(R_1)$.

  For $p\ge2$, define this function $P_2:\co\to\co$:
  $$P_2(z)=\left\{\begin{array}{ll}P_1(z)&z\in R_1\\0&z\notin
  R_1\end{array}\right.,$$ so $P_2\in L^p(\co)\cap L^2(\co)$.  The
  Cauchy transform $\cc(P_2)$ is an element of $L^1_{loc}(\co)$
  (\cite{aim} Theorem 4.3.9, Theorem 4.3.13), and its distributional
  derivative on $\co$, $\partial_{\z}\cc(P_2)$, is represented by
  $P_2\in L^2(\co)$ (\cite{aim} Theorem 4.3.10).  The restriction
  $\cc(P_2)|_{R_1}$ has distributional derivative
  $\partial_{\z}((\cc(P_2)|_{R_1}))$ on $R_1$ represented by
  $P_2|_{R_1}=P_1$.  The restriction $u_1-(\cc(P_2)|_{R_1})$ is
  integrable on $R_1$, and the distributional derivative on $R_1$
  satisfies, for $\varphi\in\cc_0^\infty(R_1)$,
  \begin{eqnarray}
    \partial_{\z}(u_1-(\cc(P_2)|_{R_1})):\varphi&\mapsto&-\int_{R_1}(u_1-(\cc(P_2)|_{R_1}))\frac{\partial\varphi}{\partial\z}\nonumber\\
    &=&-\int_{R_1}u_1\frac{\partial\varphi}{\partial\z}-\int_{R_1}P_1\varphi\label{eq26}\\
    &=&-\int_{R_1}\left(P_1\varphi+u_1\frac{\partial\varphi}{\partial\z}\right)\nonumber\\
    &=&-\int_{R_2}\frac{\partial}{\partial\z}\left(u_1\varphi\right)\label{eq24}\\
    &=&0.\label{eq25}
  \end{eqnarray}
  Line (\ref{eq24}) follows from Equation (\ref{eq23}), and
  $R_2\Subset R_1$ is a smaller rectangle with interior containing the
  support of $\varphi$.  Line (\ref{eq25}) uses Proposition
  \ref{lem4.2}, and this is the key technical step using the
  assumptions on the $\z$-derivative without any integrability of the
  $z$-derivative.  It follows from (\ref{eq26}) and (\ref{eq25}) that
  the distributional derivative on $R_1$,
  $\partial_{\z}u_1=\partial_{\z}\left(\cc(P_2)|_{R_1}\right)$, is
  represented by $P_1$, which is a.e.\ equal to $\frac{\partial
  u_1}{\partial\z}$ as in (\ref{eq87}), so the distributional and
  a.e.\ pointwise $\z$-derivatives coincide.  It follows by
  restriction that $u|_R=u_1|_R$ has distributional derivative on $R$,
  $\partial_{\z}(u|_R)$, represented by $P_1|_R=P|_R\in L^2(R)$.

  Also, Weyl's Lemma (\cite{aim} Lemma A.6.10, \cite{gm} Theorem 9)
  applies, so there exists a holomorphic function $\Phi:R_1\to\co$
  equal to $u_1-(\cc(P_2)|_{R_1})$ as an element of $L^1(R_1)$.  The
  Beurling transform, $\SSS(P_2)\in L^2(\co)$, is a function defined
  almost everywhere in $\co$ (\cite{aim} Theorem 4.0.10) that
  represents the distributional derivative of $\cc(P_2)$ on $\co$,
  $\partial_z(\cc(P_2))$ (\cite{aim} Theorem 4.3.10).  So, the
  distributional derivative of $u_1$ on $R_1$,
  $\partial_z(u_1)=\partial_z(\Phi+(\cc(P_2)|_{R_1}))$, is represented
  by $\frac{\partial\Phi}{\partial z}+\left(\SSS(P_2)|_{R_1}\right)$.
  The restrictions $\Phi|_R$ and $\SSS(P_2)|_R$ are both in $L^2(R)$,
  so the distributional derivative of $u$ on $R$, $\partial_z(u|_R)$,
  is represented by $\Phi|_R+\left(\SSS(P_2)|_R\right)\in L^2(R)$.

  For $p>2$, $\cc(P_2)\in\cc^{0,\alpha}(\co)$ (\cite{aim} Theorem
  4.3.13), and the restriction $\Phi|_R$ is in $\cc^{0,\alpha}(R)$, so
  by continuity, $u|_R=\Phi|_R+(\cc(P_2)|_{R})$ pointwise everywhere
  in $R$ and $u|_R\in\cc^{0,\alpha}(R)$.
\end{proof}

\begin{cor}\label{thm4.2}
  Let $E:\Omega_1\times\Omega_2\to\co$, let $u:\Omega_1\to\Omega_2$ be
  continuous, and suppose that $u$ satisfies {\rm{$(*)$}}, and
  \begin{equation}\label{eq27}
    \frac{\partial u}{\partial\z}=E(z,u(z))
  \end{equation}
  almost everywhere.
  \begin{itemize}
    \item If $E$ is continuous, then for any $R\Subset\Omega_1$,
    $u|_R\in\cc^{0,\alpha}(R)$ for all $0<\alpha<1$.
    \item If $0<\beta<1$ and
    $E\in\cc^{0,\beta}_{loc}(\Omega_1\times\Omega_2)$, then for any
    $R\Subset\Omega_1$, $u|_R\in\cc^{1,\beta}(R)$.
    \item For $r\in\na$, $r=\infty$, or $r=\omega$, if
    $E\in\cc^r(\Omega_1\times\Omega_2)$, then $u\in\cc^r(\Omega_1)$.
  \end{itemize}
\end{cor}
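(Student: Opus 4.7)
The plan is to set $P(z) := E(z, u(z))$ and apply Theorem \ref{lem4.3}, bootstrapping the regularity via the representation $u_1 = \Phi + \cc(P_2)|_{R_1}$ (with $\Phi$ holomorphic) produced inside its proof. For the first bullet, $u$ and $E$ continuous imply $P$ is continuous on $\Omega_1$, hence $P \in L^p_{loc}(\Omega_1)$ for every $p \in [2, \infty)$, and Theorem \ref{lem4.3} with $\alpha = 1 - 2/p$ gives $u|_R \in \cc^{0,\alpha}(R)$ for every $\alpha \in (0, 1)$.

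For the second bullet, fix some $\alpha < 1$ close to $1$ from the first bullet. Combined with $E \in \cc^{0,\beta}_{loc}$, the chain-rule estimate
\[
 |P(z_1) - P(z_2)| \leq C\bigl(|z_1 - z_2|^\beta + |u(z_1) - u(z_2)|^\beta\bigr)
\]
shows $P \in \cc^{0,\alpha\beta}_{loc}$. Schauder estimates for the Cauchy transform of a compactly supported H\"older function (\cite{aim}) then improve $\Phi + \cc(P_2)|_{R_1}$, and hence $u$, to $\cc^{1, \alpha\beta}_{loc}$, which is in particular locally Lipschitz. Reapplying the chain-rule estimate upgrades $P$ to $\cc^{0,\beta}_{loc}$, and a second pass through the representation formula gives $u|_R \in \cc^{1,\beta}(R)$.

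For the third bullet, I bootstrap: whenever $u \in \cc^{k,\beta}_{loc}$ with $k + 1 \leq r$ and $E \in \cc^r$, the chain rule gives $P \in \cc^{k,\beta}_{loc}$, and the Cauchy transform estimates upgrade $u$ to $\cc^{k+1,\beta}_{loc}$. Iterating starting from the second bullet produces $u \in \cc^r$ for $r \in \na$; the case $r = \infty$ is the intersection of all finite cases. For $r = \omega$ I appeal to a classical real-analytic elliptic regularity theorem (Morrey): smooth solutions of elliptic PDEs with real-analytic nonlinearity are real analytic.

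The principal technical hurdle is the second bullet, where the Schauder estimates for the Cauchy and Beurling transforms on H\"older spaces are needed rather than just the $L^p$ and $\cc^{0,\alpha}$ mapping properties already invoked in Theorem \ref{lem4.3}; these are standard (cf.\ \cite{aim}). Once this is in hand, the finite-$r$ bootstrap is a routine differentiation-plus-reapplication, and the real-analytic case reduces to a known external result.
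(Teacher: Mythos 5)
Your proposal matches the paper's proof in substance: set $P(z)=E(z,u(z))$, feed it into Theorem \ref{lem4.3} to get $W^{1,2}\cap\cc^{0,\alpha}$ regularity, then bootstrap via the H\"older chain-rule estimate and a Schauder-type estimate for the Cauchy transform, with the real-analytic case outsourced to Morrey. The only difference is cosmetic: where the paper cites \cite{aim} Theorem 15.0.7 as a black-box Schauder result (distributional $\bar z$-derivative in $\cc^{0,\gamma}_{loc}$ plus $W^{1,2}_{loc}$ implies $\cc^{1,\gamma}_{loc}$), you instead unpack the representation $u_1=\Phi+\cc(P_2)|_{R_1}$ from inside the proof of Theorem \ref{lem4.3} and invoke the H\"older mapping properties of the Cauchy transform directly. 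That is essentially how such a Schauder estimate is proved, so the two arguments are the same modulo citation granularity; the one care point (which the paper handles with nested rectangles and which you signal with ``$\mathrm{loc}$'') is that $P_2$, being a cutoff of $P$, is not globally H\"older, so the gain in regularity is interior. Your two-pass argument for the second bullet -- $\alpha\beta$ then $\beta$ after Lipschitz -- and your inductive step for the third are exactly what the paper does; the paper also writes out $\Delta u$ explicitly before citing Morrey, whereas you cite Morrey directly, but that is a presentation choice.
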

\begin{proof}
  First, if $E$ is continuous on $\Omega_1\times\Omega_2$, then for
  any $p\ge2$, $u$ satisfies the hypotheses of Theorem \ref{lem4.3},
  with $\frac{\partial u}{\partial\z}$ equal almost everywhere to the
  continuous function $P(z)=E(z,u(z))\in L^p_{loc}(\Omega_1)$.  The
  conclusion from the Theorem is that for any $R\Subset\Omega_1$ and any
  $0<\alpha<1$, 
  \begin{equation}\label{eq72}
    u|_R\in W^{1,2}(R)\cap\cc^{0,\alpha}(R).
  \end{equation}

  For the second claim of the Corollary, consider larger rectangles
  $R\Subset R_2\Subset R_1\Subset\Omega_1$.  $E(z,w)$ is
  $\cc^{0,\beta}$ on the compact product $\overline{R_1}\times
  u(\overline{R_1})$, and the composite $E(z,u(z))$ is continuous,
  with H\"older exponent $\alpha\beta$.  Because the RHS of
  (\ref{eq27}), restricted to $z\in R_1$, is in
  $\cc^{0,{\alpha\beta}}(R_1)$, it follows from (\ref{eq72}) and
  \cite{aim} Theorem 15.0.7 that
  $u|_{R_1}\in\cc^{1,\alpha\beta}_{loc}(R_1)$.  The composite
  $E(z,u(z))$ is now in $\cc^{0,{\beta}}(R_2)$, and \cite{aim} Theorem
  15.0.7 applies again to establish the claim.

  For the third claim with $r=1$, because the conclusion is a local
  property of $u$, it is enough to work with the same rectangle $R$ as
  the previous case and $u|_R$ as in (\ref{eq72}).  If
  $E\in\cc^1(\Omega_1\times\Omega_2)$, then the composite $E(z,u(z))$
  is $\cc^{0,\alpha}$ on $R$, and again by \cite{aim} Theorem 15.0.7,
  $u|_R\in\cc^{1,\alpha}_{loc}(R)$.  So, $u\in\cc^1(\Omega_1)$.  If
  $E\in\cc^2(\Omega_1\times\Omega_2)$, then the composite
  $E(z,u(z))\in\cc^{1,\alpha}_{loc}(R)$, so
  $u|_R\in\cc^{2,\alpha}_{loc}(R)$.  For $r>1$, the bootstrap
  technique applies, iterating $r$ times when $E$ is $\cc^r$, and if
  $E$ is smooth, then $u$ is smooth.

  When $E(z,w)$ is real analytic, $u$ is smooth, and using the chain
  rule (\cite{aim} \S2.9.1) gives:
  \begin{eqnarray}
    \Delta(u)&=&4\frac{\partial}{\partial z}\frac{\partial
    u}{\partial\z}=4\frac{\partial}{\partial z}(E(z,u(z))\nonumber\\
    &=&4(E_z(z,u(z))+E_w(z,u(z))\frac{\partial u}{\partial
    z}+E_{\w}(z,u(z))\overline{\frac{\partial u}{\partial\z}}).\nonumber
  \end{eqnarray}
  This complex equation (or the system of two real equations
  $\Delta(\rp(u))=\rp(\Delta(u))$ and $\Delta(\ip(u))=\ip(\Delta(u))$)
  is a second order, nonlinear, elliptic system where the RHS is a
  real analytic expression in $z$, $u$, (or their real and imaginary
  parts) and the first derivatives of $u$.  For such a system, $\cc^3$
  solutions $u$ must be real analytic (\cite{m}).
\end{proof}
\begin{cor}\label{cor4.7}
  For a connected open set $\Omega_1\subseteq\co$, suppose
  $u:\Omega_1\to\Omega_2$ is continuous and satisfies {\rm{$(*)$}}.
  Given $w_0\in\Omega_2$, let $Z_0=\{z\in\Omega_1:u(z)=w_0\}$.  If
  there is a function $A:\Omega_1\setminus Z_0\to\co$ so that $A\in
  L^p(\Omega_1\setminus Z_0)$ for some $p>2$ and
  \begin{equation}\label{eq28}
    \frac{\partial u}{\partial\z}=(u(z)-w_0)A(z)
  \end{equation}
  almost everywhere in $\Omega_1\setminus Z_0$, then either $Z_0$ is a
  set of isolated points in $\Omega_1$ or $Z_0=\Omega_1$.
\end{cor}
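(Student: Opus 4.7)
The proof splits into a topological reduction and a local similarity-principle argument. Let $Z_1$ denote the set of accumulation points of $Z_0$ in $\Omega_1$. Since $u$ is continuous, $Z_0$ is closed, whence $Z_1\subseteq Z_0$; moreover $Z_1$ is closed in $\Omega_1$ and $Z_0^{\circ}\subseteq Z_1$. The strategy is to prove the reverse inclusion $Z_1\subseteq Z_0^{\circ}$, so that $Z_1=Z_0^{\circ}$ is both open and closed in the connected set $\Omega_1$ and therefore coincides with $\emptyset$ or $\Omega_1$. In the first case $Z_0$ has no accumulation points and is discrete; in the second, $Z_0$ is closed and contains the dense set $Z_1=\Omega_1$, so $Z_0=\Omega_1$.

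Now fix an accumulation point $z_*\in Z_0$ and a disk $D=B_R(z_*)\Subset\Omega_1$. Extend $A$ by zero on $D\cap Z_0$ to obtain $\tilde A\in L^p(D)$, and set $v=u-w_0$. To apply Theorem \ref{lem4.3} on $D$ with $P=v\tilde A\in L^p(D)$, one needs the pointwise identity $v_{\z}=P$ almost everywhere on $D$; this holds on $D\setminus Z_0$ by hypothesis and on the interior of $Z_0\cap D$ trivially, so only $\partial Z_0\cap D$ requires attention. Here a Fubini slicing argument works: along any horizontal line $L$, the points of $Z_0\cap L$ that are isolated in $Z_0\cap L$ form a countable (hence one-dimensionally null) subset of $L$, so the planar set of $z\in Z_0$ admitting no horizontal $Z_0$-accumulation has measure zero, and the same holds vertically. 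At every remaining point of $\partial Z_0\cap D$ where $u_x$ and $u_y$ both exist, both partial derivatives must vanish, since $u=w_0$ along a sequence in $Z_0$ converging to $z$ in each coordinate direction; this forces $v_{\z}=0=v\tilde A$. Together with the countable exceptional set from $(*)$, this supplies the a.e.\ identity, and Theorem \ref{lem4.3} yields $v|_D\in W^{1,2}(D)\cap\cc^{0,\alpha}(D)$ with $\partial_{\z}v=v\tilde A$ distributionally.

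Finally, form the Bers-type similarity factor $\phi(z)=-\frac{1}{\pi}\int_D\tilde A(\zeta)/(\zeta-z)\,dxdy$, which lies in $\cc^{0,\alpha}(\co)$ with $\partial_{\z}\phi=\tilde A$ distributionally on $D$, and set $h=v\,e^{-\phi}$. A Sobolev product-and-chain-rule calculation, legitimate because $v\in W^{1,2}(D)$ and $e^{-\phi}$ is a bounded $\cc^{0,\alpha}$ function with controlled $W^{1,p}$ distributional derivatives, gives $\partial_{\z}h=(v_{\z}-v\tilde A)e^{-\phi}=0$ distributionally on $D$. By Weyl's Lemma $h$ is holomorphic on $D$, and since $e^{-\phi}$ is nowhere zero, the zero set of $h$ equals $Z_0\cap D$; the identity theorem applied at the accumulation point $z_*$ then forces $h\equiv 0$, so $v\equiv 0$ on $D$ and $z_*\in Z_0^{\circ}$. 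The main technical hurdle in this plan is exactly the step handling $\partial Z_0\cap D$, where the hypothesis on $A$ is silent; once that measure-zero issue is dispatched by the slicing above, the remainder is the standard Bers--Vekua similarity-principle recipe followed by Weyl's Lemma and the identity theorem.
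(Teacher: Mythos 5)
Your proof is correct, but it reaches the conclusion by a genuinely different route at the key technical step of crossing the zero set $Z_0$. The paper applies Theorem~\ref{lem4.3} only on the open complement $R_1\setminus Z_0$, where the hypothesis (\ref{eq28}) actually lives, and so obtains $\sigma=\exp(\cc(A_1))(u-w_0)$ holomorphic a priori only on $R_1\setminus Z_0$; it then extends holomorphicity of the continuous function $\sigma$ across $R_1\cap Z_0$ by invoking Rad\'o's Theorem. You instead verify the pointwise Cauchy--Riemann equation $v_{\bar z}=v\tilde A$ almost everywhere on the \emph{entire} neighborhood, including $Z_0$, via a Fubini slicing argument: for a.e.\ $z\in Z_0$ there are sequences in $Z_0$ converging to $z$ along both the horizontal and vertical lines through $z$, which forces $u_x(z)=u_y(z)=0$ wherever these derivatives exist; this lets you apply Theorem~\ref{lem4.3} on the whole neighborhood at once and replace Rad\'o by the ordinary identity theorem at the accumulation point $z_*$. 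Both routes are sound. The paper's version has the side benefit of recording the explicit factorization (\ref{eq71}), $u(z)-w_0=e^{U}\sigma$ with $\sigma$ holomorphic and $U$ H\"older, which is re-used in the proof of Theorem~\ref{thm2.9}; your slicing lemma is a clean, self-contained substitute for Rad\'o. Two small adjustments would tighten your write-up: Theorem~\ref{lem4.3} is stated for rectangles $R\Subset\Omega_1$, so work with a rectangle $R\ni z_*$ rather than a disk $D$; and the measurability of the set of $z\in Z_0$ with no horizontal $Z_0$-accumulation deserves a line of justification before Fubini is applied (it is Lebesgue measurable, e.g.\ as a countable union over $n$ of sets obtained by removing from $Z_0$ a projection of a Borel subset of $\re^3$).
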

\begin{rem}
  Proposition C of \cite{gr} is similar to the above statement, but
  its hypothesis includes the distributional derivative version of
  (\ref{eq28}) (see also \cite{is1}, \cite{is}).  In view of the Proof
  of Theorem \ref{lem4.3}, the distributional derivative equation is
  equivalent to the a.e.\ pointwise property under these conditions,
  so Corollary \ref{cor4.7} is also a corollary of \cite{gr}
  Proposition C.  Because we need formula (\ref{eq71}) in the Proof of
  Theorem \ref{thm2.9}, here we sketch a Proof of Corollary
  \ref{cor4.7} using the same methods as the Proof of Theorem
  \ref{lem4.3}.
\end{rem}
\begin{proof}[Proof of Corollary \ref{cor4.7}]
  Let $z_0$ be an arbitrary point of $Z_0$, and let
  $R_1\Subset\Omega_1$ be a neighborhood of $z_0$.  Define this
  function $A_1:\co\to\co$:
  $$A_1(z)=\left\{\begin{array}{ll}-A(z)&z\in R_1\setminus
  Z_0\\0&z\notin R_1\setminus Z_0\end{array}\right.,$$ so $A_1\in
  L^p(\co)\cap L^2(\co)$.  The Cauchy transform $\cc(A_1)$ is in
  $\cc^{0,\alpha}(\co)$, and its distributional derivative on $\co$,
  $\partial_{\z}\cc(A_1)$, is represented by $A_1\in L^2(\co)$.  The
  function $\SSS(A_1)\in L^2(\co)$ represents the distributional
  derivative of $\cc(A_1)$ on $\co$, $\partial_z(\cc(A_1))$.  So, the
  restriction $\cc(A_1)|_{R_1}$ is bounded and in $W^{1,2}(R_1)$, with
  distributional $\z$-derivative on $R_1$ represented by $A_1|_{R_1}$.
  This is enough (\cite{gt}, \cite{bbc} \S8) for the weak chain rule
  to apply: the composite $\exp(\cc(A_1)|_{R_1})$ is in $W^{1,1}(R_1)$
  and its distributional $\z$-derivative on $R_1$ is represented by
  $\exp(\cc(A_1)|_{R_1})(A_1|_{R_1})$.

  Now Theorem \ref{lem4.3} applies to $u$ on the open set $R_1\setminus
  Z_0$, with $P=\left((u(z)-w_0)A(z)\right)|_{R_1\setminus Z_0}\in
  L^p(R_1\setminus Z_0)$.  Let $z_1$ be any point of $R_1\setminus
  Z_0$, and let $R_2\Subset R_1\setminus Z_0$ be a neighborhood of
  $z_1$.  The restriction $u|_{R_2}$ is in
  $W^{1,2}(R_2)\cap\cc^{0,\alpha}(R_2)$, and its distributional
  $\z$-derivative on $R_2$ is represented by $P|_{R_2}$, from
  (\ref{eq26}).  This is enough for the weak product rule to apply:
  the product
  $\left(\exp(\cc(A_1)|_{R_1})\right)|_{R_2}(u(z)-w_0)|_{R_2}$ is in
  $W^{1,1}(R_2)$, with distributional $\z$-derivative represented on
  $R_2$ by
  \begin{eqnarray*}
  &&\left(\exp(\cc(A_1)|_{R_1})\right)|_{R_2}P|_{R_2}+\left(\exp(\cc(A_1)|_{R_1})(A_1|_{R_1})\right)|_{R_2}(u(z)-w_0)|_{R_2}\\
  &=&\left(\exp(\cc(A_1)|_{R_1})\right)|_{R_2}\left(\left((u(z)-w_0)A(z)\right)|_{R_2}+(A_1)|_{R_2}(u(z)-w_0)|_{R_2}\right)\\
  &=&\left(\exp(\cc(A_1)|_{R_1})\right)|_{R_2}(u(z)-w_0)|_{R_2}\left((A(z))|_{R_2}+(-A(z))|_{R_2}\right)\\
  &=&0.
  \end{eqnarray*}
  By Weyl's Lemma and continuity,
  $\left(\exp(\cc(A_1)|_{R_1})\right)|_{R_2}\left(u(z)-w_0)\right)|_{R_2}$
  is holomorphic on $R_2$.  Since $z_1$ was arbitrary, every point in
  $R_1\setminus Z_0$ is contained in some neighborhood where
  $\sigma:R_1\to\co$,
  \begin{equation}\label{eq71}
    \sigma(z)=\exp(\cc(A_1)|_{R_1})((u(z)-w_0)|_{R_1}),
  \end{equation}  
  restricts to a holomorphic function, so $\sigma$ is holomorphic on
  $R_1\setminus Z_0$.  Because $\sigma$ is continuous on $R_1$ and
  equal to $0$ exactly on $R_1\cap Z_0$, Rad\'o's Theorem (\cite{n})
  implies $\sigma$ is holomorphic on $R_1$, so $z_0$ is either an
  isolated zero of $u(z)-w_0$ or $u(z)\equiv w_0$ on $R_1$.  It
  follows that the set of non-isolated points in $Z_0$ is both open
  and closed in $\Omega_1$, so it is either empty or all of
  $\Omega_1$.
\end{proof}

\section{Examples in a borderline case}\label{sec4}

The following two Examples give solutions of $\frac{\partial
  v}{\partial\z}=P(z)$ where $v$ and $P$ are continuous but $v$ is not
$\cc^1$.  This can be considered a borderline case, as $\alpha\to1^-$
in Theorem \ref{lem4.3}, or $\beta\to0^+$ in Corollary \ref{thm4.2}.
The function $v$ in Example \ref{ex4.1} is well-known and elementary,
but $\frac{\partial v}{\partial\z}$ fails to exist at one point.  The
goal of Example \ref{ex4.4} is to improve Example \ref{ex4.1} by
finding a continuous function $V$ where the partial derivatives exist
at every point, and ${\partial V}/{\partial\z}$ is continuous, while
${\partial V}/{\partial z}$ is not locally bounded.  These examples
are of interest from the point of view of the foundations of classical
complex analysis, not motivated by any particular application.
\begin{notation}\label{not3.5}
  The notation $D_{a,r}$ refers to an open disk in $\co$ with center
  $a$ and radius $r>0$.
\end{notation}
\begin{example}\label{ex4.1}
  The following function (adapted from \cite{aim} \S15.1) is
  continuous but not $\cc^1$; it satisfies the assumptions of Theorem
  \ref{lem4.3} for all $p\ge2$, and the first part of Corollary
  \ref{thm4.2}, but not the second or third.  Using the real-valued
  natural logarithm $\ln$ and positive square root, define this
  function for $z\in D_{0,1}$:
  \begin{equation}\label{eq78}
    v(z)=\left\{\begin{array}{ll}0&z=0\\z\sqrt{-\ln(|z|^2)}&0<|z|<1\end{array}\right..
  \end{equation}
  $v$ is real analytic except at the origin, where the partial
  derivatives $v_x(0)$ and $v_y(0)$ do not exist.  For $z\ne0$, the
  derivatives are:
  \begin{eqnarray}
    v_{\z}&=&\frac{-z^2}{2\sqrt{-\ln(|z|^2)}|z|^2},\label{eq75}\\
    v_z&=&\sqrt{-\ln(|z|^2)}+\frac{-1}{2\sqrt{-\ln(|z|^2)}}.\nonumber
  \end{eqnarray}
  So, $v_{\z}$ has a removable discontinuity: there is a continuous
  function $P$ equal almost everywhere to $v_{\z}$, but there is no
  continuous function equal almost everywhere to the unbounded
  function $v_z$.
\end{example}
\begin{example}\label{ex4.4}
  We start with a smooth cutoff function: let
  $\kappa:(0,\infty)\to[0,1]$ be a fixed, weakly decreasing,
  $\cc^\infty$ function satisfying $\kappa(x)\equiv1$ for
  $0<x\le\frac12$, and $\kappa(x)\equiv0$ for $x\ge
  e^{-1/2}\approx0.6$.

  Next, define the following family of functions $V_t(z):\co\to\co$,
  depending on a parameter $0<t\le\frac12$:
  $$V_t(z)=\left\{\begin{array}{ll}0&z=0\\\kappa(|z|)z|z|^{2t}\sqrt{-\ln(|z|^2)}&0<|z|<1\\0&|z|\ge1\end{array}\right..$$
  Each $V_t(z)$ is a smoothed modification of $v(z)$ from
  (\ref{eq78}): the cutoff $\kappa$ makes $V_t$ smooth on
  $\co\setminus\{0\}$, and a calculation using the positivity of the
  exponent $2t$ shows that the $x$, $y$ partial derivatives exist at
  the origin, where $\frac{\partial V_t}{\partial x}(0)=\frac{\partial
  V_t}{\partial y}(0)=0$.  A little calculus shows that $V_t(z)$ is
  bounded by a constant not depending on $t$: $|V_t(z)|\le e^{-1/2}$.

  For $0<|z|<1$, the expression:
  \begin{eqnarray}
  \frac{\partial
  V_t}{\partial\z}&=&\frac{\partial}{\partial\z}(\kappa(|z|))z|z|^{2t}\sqrt{-\ln(|z|^2)}\nonumber\\
  &&\
  +\kappa(|z|)t\frac{z}{\z}|z|^{2t}\sqrt{-\ln(|z|^2)}\label{eq93}\\
  &&\
  -\kappa(|z|)\frac12\frac{z}{\z}|z|^{2t}/\sqrt{-\ln(|z|^2)}\nonumber
  \end{eqnarray}
  shows that $\frac{\partial V_t}{\partial\z}$ is continuous on $\co$.
  The following calculation shows that $\frac{\partial
  V_t}{\partial\z}$ is bounded by a constant not depending on $t$.
  Recalling that $\kappa$ does not depend on $t$,
  $\frac{\partial}{\partial\z}(\kappa(|z|))$ is bounded by some
  $B_1>0$.
  \begin{eqnarray*}
  \left|\frac{\partial
  V_t}{\partial\z}\right|&\le&B_1\max_{\frac12\le|z|\le
  e^{-1/2}}\left\{|z|^{1+2t}\sqrt{-\ln(|z|^2)}\right\}\\ &&\
  +\max_{0<|z|\le
  e^{-1/2}}\left\{t|z|^{2t}\sqrt{-\ln(|z|^2)}\right\}\\ &&\
  +\max_{0<|z|\le
  e^{-1/2}}\left\{\frac12|z|^{2t}/\sqrt{-\ln(|z|^2)}\right\}\\
  &\le&B_1e^{-1/2}+\frac{\sqrt{t}}{\sqrt{2e}}+\frac12e^{-t}\le B_2.
  \end{eqnarray*}

  Next, choose any real sequence $R_k$ decreasing to limit $0$, and
  another positive sequence $r_k$ with $R_k-r_k>R_{k+1}+r_{k+1}$.
  (For example, $R_k=10^{-k}$ and $r_k=10^{-(k+1)}$.)

  Finally, define
  $$V(z)=\sum_{k=1}^\infty2^{-k}r_kV_{2^{-4k}}\left(\frac{z-R_ke^{\pi
  i/4}}{r_k}\right).$$ The expression
  $V_{2^{-4k}}\left(\frac{z-R_ke^{\pi i/4}}{r_k}\right)$ is the result
  of re-scaling $V_{2^{-4k}}(z)$, supported in $D_{0,1}$, to
  $V_{2^{-4k}}(z/r_k)$, supported in $D_{0,r_k}$, and then translating
  along the diagonal, so that $V_{2^{-4k}}\left(\frac{z-R_ke^{\pi
  i/4}}{r_k}\right)$ is supported in $D_{R_ke^{\pi i/4},r_k}$.  These
  support disks approach, but do not contain, the origin as
  $k\to\infty$, and are disjoint from each other, so the above
  infinite sum trivially converges for each $z\in\co$.  Every point in
  $\co$ except the origin has a neighborhood intersecting at most one
  of these disks, so $V$ is continuous and its partial
  derivatives exist on $\co\setminus\{0\}$.

  Some of the disks $D_{R_ke^{\pi i/4},r_k}$ may intersect the $x$ and
  $y$ axes, but by the choice of the cutoff function $\kappa$ and the
  numerical inequality $e^{-1/2}<1/\sqrt{2}$, the support of
  $V_{2^{-4k}}\left(\frac{z-R_ke^{\pi i/4}}{r_k}\right)$ is actually
  contained in $D_{R_ke^{\pi i/4},r_k/\sqrt{2}}$, which is contained
  in the open first quadrant and disjoint from the $x$-axis and the
  $y$-axis.  The partial derivatives of $V$ exist at the origin, where
  $\frac{\partial V}{\partial x}(0)=\frac{\partial V}{\partial
  y}(0)=0$, because $V\equiv0$ along both the axes.
 
  $V$ is continuous at the origin,
  $\displaystyle{\lim_{z\to0}V(z)=V(0)=0}$, and in fact satisfies the
  stronger condition of complex differentiability at that point:
  $\displaystyle{\lim_{z\to0}\frac{V(z)}{z}=0}$.  For
  $z$ in the $k^{th}$ disk, $V(z)=0$ for $\left|z-R_ke^{\pi
  i/4}\right|>e^{-1/2}r_k$, and otherwise,
  \begin{eqnarray}
    \frac{\left|V(z)\right|}{|z|}&=&\frac{\left|2^{-k}r_kV_{2^{-4k}}\left(\frac{z-R_ke^{\pi
          i/4}}{r_k}\right)\right|}{|z|}\label{eq106}\\
          &\le&\frac{\displaystyle{2^{-k}r_k\max_{z\in\co}\left|V_{2^{-4k}}(z)\right|}}{R_k-e^{-1/2}r_k}\nonumber\\
          &\le&\frac{2^{-k}e^{-1/2}}{\frac{R_k}{r_k}-e^{-1/2}}\le\frac{2^{-k}e^{-1/2}}{1-e^{-1/2}}.\nonumber
  \end{eqnarray}

  The derivative $\frac{\partial V}{\partial\z}$ is continuous at
  every point of $\co$, including the origin.  To show
  $$\lim_{z\to0}\frac{\partial V}{\partial\z}(z)=\frac{\partial
  V}{\partial\z}(0)=0,$$ apply the chain rule at an arbitrary point
  $z_0$ in any particular disk:
  \begin{eqnarray*} \frac{\partial
  V}{\partial\z}(z_0)&=&\left.\frac{\partial}{\partial\z}\left(2^{-k}r_kV_{2^{-4k}}\left(\frac{z-R_ke^{\pi
  i/4}}{r_k}\right)\right)\right|_{z=z_0}\\ &=&2^{-k}r_k\frac{\partial
  V_{2^{-4k}}}{\partial\z}\left(\frac{z_0-R_ke^{\pi
  i/4}}{r_k}\right)\frac1{r_k}\\ \implies \left|\frac{\partial
  V}{\partial\z}(z_0)\right|&\le&2^{-k}B_2.
  \end{eqnarray*}

  Now, consider the $z$-derivative.  For $0<|z|<1$, the expression:
  \begin{eqnarray}
  \frac{\partial V_t}{\partial
  z}&=&\frac{\partial}{\partial\z}(\kappa(|z|))z|z|^{2t}\sqrt{-\ln(|z|^2)}\nonumber\\
  &&\ +\kappa(|z|)(1+t)|z|^{2t}\sqrt{-\ln(|z|^2)}\label{eq92}\\ &&\
  -\kappa(|z|)\frac12|z|^{2t}/\sqrt{-\ln(|z|^2)}\nonumber
  \end{eqnarray}
  shows that $\frac{\partial V_t}{\partial z}$ is continuous on $\co$.
  However, the coefficient $(1+t)$ in term (\ref{eq92}) is
  significantly larger than the corresponding coefficient $t$ in
  (\ref{eq93}) as $t\to0^+$; this gain is the key step in this
  example.  The continuity of both $\frac{\partial V_t}{\partial z}$
  and $\frac{\partial V_t}{\partial\z}$ imply that $V_t$ is $\cc^1$ on
  $\co$, and $V$ is $\cc^1$ on $\co\setminus\{0\}$.  This, together
  with (\ref{eq106}), shows that $V$ is differentiable (real
  differentiable, in the sense of multivariable calculus) on $\co$.

  To show $\frac{\partial V}{\partial z}$ is not locally bounded,
  define a sequence
  $$z_k=r_ke^{\left(-2^{4k-2}\right)}+R_ke^{\pi i/4},$$ so
  $\displaystyle{\lim_{k\to\infty}z_k=0}$.
  \begin{eqnarray*}
    &&\frac{\partial V}{\partial z}(z_k)\\
    &=&\left.\frac{\partial}{\partial
    z}\left(2^{-k}r_kV_{2^{-4k}}\left(\frac{z-R_ke^{\pi
    i/4}}{r_k}\right)\right)\right|_{z=z_k}\\
    &=&2^{-k}r_k\frac{\partial V_{2^{-4k}}}{\partial
    z}\left(e^{\left(-2^{4k-2}\right)}\right)\frac1{r_k}\\
    &=&2^{-k}\left(0+\left(1+2^{-4k}\right)\left|e^{\left(-2^{4k-2}\right)}\right|^{\left(2^{-4k+1}\right)}\sqrt{-\ln\left(\left|e^{\left(-2^{4k-2}\right)}\right|^2\right)}\right.\\
    &&\
    \left.-\frac12\left|e^{\left(-2^{4k-2}\right)}\right|^{\left(2^{-4k+1}\right)}/\sqrt{-\ln\left(\left|e^{\left(-2^{4k-2}\right)}\right|^2\right)}\right)\\
    &=&2^{-k}\left(\left(1+2^{-4k}\right)e^{-1/2}\sqrt{2^{4k-1}}-\frac12e^{-1/2}/\sqrt{2^{4k-1}}\right)\\
    &=&2^k/\sqrt{2e}.
  \end{eqnarray*}
  It follows that the derivatives $\frac{\partial V}{\partial x}$,
  $\frac{\partial V}{\partial y}$ are also not locally bounded.
\end{example}
\begin{rem}\label{rem3.6}
  The above piecewise construction, with smooth cutoffs and a sequence
  of exponents $t=2^{-4k}$, is similar to examples constructed in
  \cite{rosay} and \cite{cp}, of $\cc^\infty$ vector valued functions
  where the $\z$-derivative is small compared to the $z$-derivative.
\end{rem}
\begin{rem}\label{rem3.4}
  It is well-known that the Beurling transform $\SSS:v_{\z}\mapsto
  v_z$ need not preserve the $\cc^0$ or $L^\infty$ properties.
  Example \ref{ex4.4} shows that this still holds even when $v_{\z}$
  is the continuous derivative of a differentiable function.
\end{rem}
\begin{rem}\label{rem3.5}
  In the theory of one real variable, the function $x^2\sin(1/x^2)$
  extends to a differentiable function with an unbounded derivative.
  We do not know of an analogous {\underline{elementary}} expression
  in $x$ and $y$ with the same properties as $V(z)$.  Any function
  where $v_{\z}=\frac12(v_x+iv_y)$ is locally bounded, and
  $v_z=\frac12(v_x-iv_y)$ is not, cannot be real valued; $v$ must be
  complex valued.
\end{rem}

\section{The separable Cauchy-Riemann equation}\label{sec1}

Let $\Omega_1$ and $\Omega_2$ be open subsets of $\co$.  Here we
consider the ``separable'' case of the nonlinear Cauchy-Riemann
equation where the RHS of (\ref{eq29}) factors in the form:
\begin{equation}\label{eq30}
 \frac{\partial u}{\partial\z}=f(u)g(z),
\end{equation}
for $u:\Omega_1\to\Omega_2$, $g:\Omega_1\to\co$, and where
$f:\Omega_2\to\co$ is holomorphic.  We have already considered one
separable equation in Corollary \ref{cor4.7}.  The goal is to compute
explicit (as in (\ref{eq96}), (\ref{eq100})), or implicit (as in
(\ref{eq70})) local formulas for all solutions $u$ of (\ref{eq30})
satisfying minimal regularity properties.

We consider three cases: first, where $f$ is nonvanishing, Subsection
\ref{subsec2} uses only results of single-variable complex analysis
(as in \cite{c} Ch.\ IV and \cite{n}) without appealing to integral
transforms as in Section \ref{sec2}.  Second, where $f$ has a simple
zero, Subsection \ref{subsec4.2} solves an auxiliary ODE (\ref{eq97})
to find a substitution that establishes existence and uniqueness for
(\ref{eq30}).  It is not until the third case, where $f$ has a zero of
multiplicity greater than one, that we need to use the results of
Section \ref{sec2}, in Subsection \ref{subsec3}.

\subsection{Nonvanishing $f$}\label{subsec2}

\ 

The following Lemma is an existence result; it is essentially the
first-year calculus method for solving a separable first-order ODE.
\begin{lem}\label{thm2.2}
  For functions $f:\Omega_2\to\co$ and $g:\Omega_1\to\co$, suppose
  there exist a holomorphic function $F:\Omega_2\to\co$ such that
  $\frac{\partial F}{\partial w}=\frac1{f(w)}$, and a continuous
  function $G:\Omega_1\to\co$ such that the partial derivatives $G_x$,
  $G_y$ exist and satisfy $\frac{\partial G}{\partial\z}=g(z)$.  For
  any points $z_0\in\Omega_1$, $w_0\in\Omega_2$, there exists
  a non-constant function $u:\Omega_1^0\to\Omega_2$ on some
  neighborhood of $z_0$, $\Omega_1^0\subseteq\Omega_1$, such that
  $\frac{\partial u}{\partial\z}=f(u)g(z)$ and $u(z_0)=w_0$.
\end{lem}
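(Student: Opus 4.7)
The plan is to translate the first-year calculus argument for separable ODEs into this complex-analytic setting. Formally, $u_{\z}=f(u)g(z)$ rearranges to $du/f(u)=g(z)\,d\z$; integrating both sides suggests $F(u(z))=G(z)+h(z)$ for some holomorphic function $h$, which plays the role of the ``constant of integration'' (any such $h$ contributes nothing to the $\z$-derivative). Inverting $F$ locally will then produce $u$.

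To implement this, I will first choose a holomorphic $h:\Omega_1\to\co$ satisfying both $h(z_0)=F(w_0)-G(z_0)$ and $h'(z_0)\neq-G_z(z_0)$ (where $G_z=\tfrac12(G_x-iG_y)$ is well-defined from the hypotheses on $G$); for instance, $h(z)=F(w_0)-G(z_0)+\lambda(z-z_0)$ for any $\lambda\in\co$ distinct from the specific complex number $-G_z(z_0)$. Set $\phi:=G+h$, so $\phi$ is continuous on $\Omega_1$ with pointwise partial derivatives satisfying $\phi_{\z}=g$, and $\phi(z_0)=F(w_0)$. The hypothesis $F'(w)=1/f(w)$ forces $f$ to be nonvanishing on $\Omega_2$, so in particular $F'(w_0)\neq0$, and the holomorphic inverse function theorem supplies a holomorphic local inverse $F^{-1}:U\to\Omega_2$ on some neighborhood $U$ of $F(w_0)$. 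By continuity of $\phi$ at $z_0$, shrink to a neighborhood $\Omega_1^0\subseteq\Omega_1$ of $z_0$ with $\phi(\Omega_1^0)\subseteq U$, and define $u:=F^{-1}\circ\phi$ on $\Omega_1^0$.

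Verification is then immediate: by construction $u(\Omega_1^0)\subseteq\Omega_2$ and $u(z_0)=F^{-1}(F(w_0))=w_0$. A direct $\epsilon$-estimate using complex differentiability of $F^{-1}$ together with continuity and the existence of pointwise partials of $\phi$ (no full real differentiability of $\phi$ is needed) yields the chain rule $u_x=(F^{-1})'(\phi)\phi_x$ and $u_y=(F^{-1})'(\phi)\phi_y$, and hence
\[
u_{\z}=(F^{-1})'(\phi)\,\phi_{\z}=\frac{g(z)}{F'(u(z))}=f(u(z))\,g(z).
\]
For non-constancy, observe that $\phi_z(z_0)=G_z(z_0)+h'(z_0)\neq0$ by the choice of $\lambda$, so $\phi$ is not locally constant at $z_0$; since $F^{-1}$ is injective on $U$, neither is $u$. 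There is no serious obstacle to this approach; the only mild subtlety worth flagging is ensuring non-constancy even when $g$ vanishes identically near $z_0$, which is precisely why the linear term $\lambda(z-z_0)$ is built into $h$.
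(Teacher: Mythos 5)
Your proof is correct and follows essentially the same route as the paper: both construct $u$ by composing a local holomorphic inverse of $F$ with $G$ plus a holomorphic perturbation (your $h(z)=F(w_0)-G(z_0)+\lambda(z-z_0)$ corresponds to the paper's $G(z)-G(z_0)+\theta(z)-\theta(z_0)+F(w_0)$ with $\theta$ linear), then apply the chain rule for pointwise partials and pick the holomorphic term to force $\phi_z(z_0)\neq0$ so that $u$ is non-constant. Your version is slightly more explicit about the non-constancy condition (naming $\lambda\neq -G_z(z_0)$ rather than leaving $\theta$ as an arbitrary holomorphic function to be chosen), but the argument is the same.
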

\begin{proof}
  For the existence of a primitive $F$, it is necessary that $f$ is
  holomorphic and nonvanishing on $\Omega_2$, and it would further be
  sufficient for $\Omega_2$ to be simply connected.

  Because $F^\prime(w_0)=\frac1{f(w_0)}\ne0$, there is some
  neighborhood $\Omega_2^0$ of $w_0$ such that $F$ is one-to-one on
  $\Omega_2^0$, the image $F(\Omega_2^0)$ is an open subset of $\co$,
  and there is a holomorphic local inverse
  $H:F(\Omega_2^0)\to\Omega_2^0$.  The derivative of $H$ is
  $H^\prime(\zeta)=\frac1{F^\prime(H(\zeta))}=f(H(\zeta))$.

  Let $\Omega_1^1$ be any neighborhood of $z_0$ in $\Omega_1$, and let
  $\theta:\Omega_1^1\to\co$ be any holomorphic function.  The following
  function, 
  \begin{equation}\label{eq4}
    G_1(z)=G(z)-G(z_0)+\theta(z)-\theta(z_0)+F(w_0),
  \end{equation}
  is continuous on $\Omega_1^1$ and satisfies $G_1(z_0)=F(w_0)$.
  The set
  $$\Omega_1^0=G_1^{-1}(F(\Omega_2^0))=\{z\in\Omega_1^1:G_1(x)\in
  F(\Omega_2^0)\}$$ is an open neighborhood of $z_0$, and is the
  domain of the composite function 
  \begin{equation}\label{eq95}
    u=H\circ(G_1|_{\Omega_1^0}).
   \end{equation}
  By construction, $u(z_0)=w_0$, and $$\frac{\partial
  u}{\partial\z}=H^\prime(G_1(z))\frac{\partial
  G_1}{\partial\z}=f(H(G_1(z)))g(z)=f(u)g(z).$$ Similarly,
  $$\frac{\partial u}{\partial z}=H^\prime(G_1(z))\frac{\partial
    G_1}{\partial z}=f(u)\left(\frac{\partial G}{\partial
    z}+\frac{\partial \theta}{\partial z}\right),$$ and $\theta$ can
  be chosen so that the derivative is non-zero at $z_0$.
\end{proof}
The above method constructs a local solution, of the form $u=H\circ
G_1$, which has the same $\cc^r$ regularity as $G$.  Theorem
\ref{thm2.4} shows all continuous solutions are locally of the same
form, using the following Lemma and the $(*)$ property.
\begin{lem}\label{lem2.3}
  For $f$ and $F$ as in Lemma \ref{thm2.2} and any
  $g:\Omega_1\to\co$, suppose $u:\Omega_1\to\Omega_2$ and
  $v:\Omega_1\to\Omega_2$ are continuous functions both satisfying
  property {\rm{$(*)$}}, and
  $$\frac{\partial u}{\partial\z}=f(u)g(z), \ \ \frac{\partial
  v}{\partial\z}=f(v)g(z)$$ almost everywhere in $\Omega_1$.  Then
  there exists $C:\Omega_1\to\co$ which is holomorphic and satisfies
  $F(v(z))=F(u(z))+C(z)$.
\end{lem}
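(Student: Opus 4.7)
The plan is to define $C(z) = F(v(z)) - F(u(z))$ and to show $C$ is holomorphic on $\Omega_1$ by verifying the hypotheses of the Looman-Menchoff Theorem (Proposition \ref{prop2.4}); then the identity $F(v(z)) = F(u(z)) + C(z)$ holds by construction, with $C : \Omega_1 \to \co$ the required holomorphic function.

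First, $C$ is continuous on $\Omega_1$ because $F$ is holomorphic on $\Omega_2$ and $u, v$ map $\Omega_1$ continuously into $\Omega_2$. To verify $(*)$ for $C$, observe that at any point $z \in \Omega_1$ where $u_x(z)$ and $u_y(z)$ exist, the real differentiability of the holomorphic $F$ combined with the pointwise chain rule yields $(F \circ u)_x(z) = F'(u(z))\, u_x(z)$ and $(F \circ u)_y(z) = F'(u(z))\, u_y(z)$, and likewise for $v$. Hence $C_x, C_y$ exist off the union of the at most countable exceptional sets for $u$ and $v$, which is again at most countable.

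At almost every $z \in \Omega_1$, the partial derivatives $u_x, u_y, v_x, v_y$ all exist and both equations $u_{\z} = f(u)g(z)$ and $v_{\z} = f(v)g(z)$ hold. Using $F'(w) = 1/f(w)$ on $\Omega_2$,
\begin{align*}
C_{\z}(z) &= F'(v(z))\, v_{\z}(z) - F'(u(z))\, u_{\z}(z) \\
&= \frac{f(v(z))g(z)}{f(v(z))} - \frac{f(u(z))g(z)}{f(u(z))} = 0
\end{align*}
almost everywhere on $\Omega_1$. Proposition \ref{prop2.4} then guarantees that $C$ is holomorphic on $\Omega_1$, completing the proof.

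The one delicate point, which is the main obstacle, is justifying the pointwise chain rule $(F \circ u)_x = F'(u)\, u_x$ under the weak hypothesis that $u$ only has classical partial derivatives at $z$, rather than being fully real differentiable. This works because $F$ itself is holomorphic, and hence real differentiable at every point of $\Omega_2$: writing the first-order expansion $F(u(z) + \eta) = F(u(z)) + F'(u(z)) \eta + o(|\eta|)$ as $\eta \to 0$, and substituting $\eta = u(z + h) - u(z) = h u_x(z) + o(h)$ along the real $x$-direction as $h \to 0$, immediately yields the claimed formula; the analogous argument handles $y$ and $v$, so no further regularity on $u$ or $v$ is required beyond what is already assumed.
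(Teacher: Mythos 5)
Your proposal is correct and takes essentially the same approach as the paper: define $C = F\circ v - F\circ u$, verify it is continuous and satisfies $(*)$, compute $C_{\z}=0$ almost everywhere by the pointwise chain rule and the hypothesis on $u$ and $v$, and invoke the Looman--Menchoff theorem (Proposition \ref{prop2.4}). Your final paragraph spells out the justification of the pointwise chain rule (that real differentiability of the holomorphic $F$ combined with mere existence of $u_x$ suffices to give $(F\circ u)_x = F'(u)u_x$), which the paper uses implicitly but does not write out.
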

\begin{proof}  
  Applying the chain rule off the union (still countable) of the
  exceptional sets for $u$ and $v$ from $(*)$, and Proposition
  \ref{prop2.4},
  \begin{eqnarray*}
    \frac{\partial}{\partial\z}(F(v)-F(u))&=&F^\prime(v(z))\frac{\partial
      v}{\partial\z}-F^\prime(u(z))\frac{\partial u}{\partial\z}\\
      &=_{a.e.}&\frac1{f(v(z))}f(v(z))g(z)-\frac1{f(u(z))}f(u(z))g(z)\\
    &\equiv&0\\
      \implies F(v)-F(u)&=&C(z).
  \end{eqnarray*}
\end{proof}
The following Theorem is stated as a regularity result, but our main
interest is in uniqueness --- showing that, for nonvanishing $f$, all
continuous solutions of (\ref{eq30}) that satisfy $(*)$ must be
locally of the form (\ref{eq96}).
\begin{thm}\label{thm2.4}
  Let $f:\Omega_2\to\co$ be holomorphic, and suppose
  $g:\Omega_1\to\co$ is equal to $\frac{\partial G}{\partial\z}$ for
  some $G\in\cc^r(\Omega_1)$, $r=0,1,2,\ldots,\infty,\omega$.  For a
  continuous function $v:\Omega_1\to\Omega_2$, define the open set
  $\Omega_0=\{z\in\Omega_1:f(v(z))\ne0\}$.  If $v$ satisfies
  {\rm{$(*)$}} on $\Omega_0$ and $\frac{\partial
  v}{\partial\z}=f(v)g(z)$ almost everywhere in $\Omega_0$, then
  $v\in\cc^r(\Omega_0)$.
\end{thm}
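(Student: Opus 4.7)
The plan is to reduce the regularity question to a local computation at an arbitrary point $z_0\in\Omega_0$, by combining the existence statement of Lemma \ref{thm2.2} with the ``solutions differ by a holomorphic function'' statement of Lemma \ref{lem2.3}. Informally: produce an explicit $\cc^r$ particular solution $u$ that agrees with $v$ at $z_0$, and then recover $v$ from $u$ by a holomorphic correction, forcing $v$ to have the same regularity.

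First I fix $z_0\in\Omega_0$ and set $w_0=v(z_0)$; since $f(w_0)\ne 0$ and $f$ is holomorphic, I can shrink to a simply connected neighborhood $\Omega_2^0\subseteq\Omega_2$ of $w_0$ on which $f$ is nonvanishing and choose a holomorphic primitive $F$ of $1/f$ there. Then $F$ is a local biholomorphism at $w_0$ with holomorphic inverse $H$. I invoke Lemma \ref{thm2.2} with this $F$ and the given $G$ to obtain a neighborhood $U$ of $z_0$ and a local solution $u=H\circ G_1:U\to\Omega_2^0$ of (\ref{eq30}) with $u(z_0)=w_0$, where $G_1$ is as in (\ref{eq4}). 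Since $G\in\cc^r(\Omega_1)$, the function $G_1$ (which differs from $G$ by a holomorphic function and constants) is in $\cc^r(U)$, and since $H$ is holomorphic, $u\in\cc^r(U)$. Moreover the pointwise partial derivatives $(G_1)_x,(G_1)_y$ exist everywhere on $U$ (this is already built into the hypothesis of Lemma \ref{thm2.2}), so by the holomorphic chain rule for $H$, $u$ has pointwise partial derivatives everywhere on $U$ and therefore satisfies $(*)$ with empty exceptional set.

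Next, using continuity of $v$ at $z_0$ and $v(z_0)=w_0\in\Omega_2^0$, I shrink $U$ so that $v(U)\subseteq\Omega_2^0$ and $U\subseteq\Omega_0$. On this neighborhood both $u$ and $v$ are continuous $\Omega_2^0$-valued functions, both satisfy $(*)$, and both satisfy $\partial w/\partial\z=f(w)g(z)$ almost everywhere, so Lemma \ref{lem2.3} applies (with $\Omega_1$ replaced by $U$ and $F$ viewed as a primitive of $1/f$ on $\Omega_2^0$) and yields a holomorphic $C:U\to\co$ with
$$F(v(z))=F(u(z))+C(z)=G_1(z)+C(z)\qquad(z\in U).$$
Applying $H$ to both sides gives the explicit formula $v(z)=H\bigl(G_1(z)+C(z)\bigr)$ on $U$. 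Since $G_1+C\in\cc^r(U)$ (sum of a $\cc^r$ function and a holomorphic, hence $\cc^\omega$, one) and $H$ is holomorphic on $F(\Omega_2^0)$, the composition is in $\cc^r(U)$. As $z_0\in\Omega_0$ was arbitrary, $v\in\cc^r(\Omega_0)$.

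The only genuine obstacle is the bookkeeping of neighborhoods: $U$ must be chosen small enough that $v(U)\subseteq\Omega_2^0$ (so $F\circ v$ is well-defined and single-valued) and that $U\subseteq\Omega_0$ (so the a.e.\ equation for $v$ is available on $U$). Both are routine consequences of continuity and openness, but they are the only places where care is needed. The structural content is carried entirely by Lemmas \ref{thm2.2} and \ref{lem2.3}; the more delicate distributional machinery of Section \ref{sec2} is not needed here, consistent with the author's remark that it becomes necessary only when $f$ has a zero of multiplicity greater than one.
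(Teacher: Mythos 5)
Your proof is correct and follows essentially the same strategy as the paper's: fix an arbitrary $z_0\in\Omega_0$, use Lemma \ref{thm2.2} to manufacture a $\cc^r$ local solution $u=H\circ G_1$ with $u(z_0)=v(z_0)$, apply Lemma \ref{lem2.3} to get $F\circ v = F\circ u + C$ with $C$ holomorphic, and invert $F$ locally to read off $v=H(G_1+C)\in\cc^r$ near $z_0$. The only spot handled slightly more explicitly in the paper is the domain of the local inverse $H$: the paper introduces a further neighborhood $\Omega_2^2$ where $F$ is one-to-one and pulls it back to $\Omega_0^3 = (F\circ u + C)^{-1}(F(\Omega_2^2))$ before applying $H$, whereas you ask only that $v(U)\subseteq\Omega_2^0$; you should either choose $\Omega_2^0$ small enough that $F$ is injective on it, or shrink $U$ once more so that $G_1+C$ lands in the domain of $H$. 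That is exactly the ``bookkeeping of neighborhoods'' you flagged, and it is routine, so the argument is sound.
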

\begin{proof}
  Let $z_0$ be an arbitrary point in $\Omega_0$, so $f(v(z_0))\ne0$
  and there is some simply connected neighborhood of $v(z_0)$,
  $\Omega_2^1\subseteq\Omega_2$, so that $f$ is nonvanishing on
  $\Omega_2^1$.  There exists a holomorphic $F:\Omega_2^1\to\co$ such
  that $\frac{\partial F}{\partial w}=\frac1{f(w)}$.  Let
  $\Omega_0^1=v^{-1}(\Omega_2^1)$, so $\Omega_0^1$ is an open
  neighborhood of $z_0$ in $\Omega_0$.  Lemma \ref{thm2.2} applies to
  the restrictions $g:\Omega_0^1\to\co$ and $f:\Omega_2^1\to\co$.
  There exists a solution $u:\Omega_0^2\to\Omega_2^1$ on some
  neighborhood of $z_0$, $\Omega_0^2\subseteq\Omega_0^1$, such that
  $\frac{\partial u}{\partial\z}=f(u)g(z)$, $u(z_0)=v(z_0)$, and
  $F\circ u=G_1|_{\Omega_0^2}$, where $G_1(z)=G(z)-G(z_0)+F(v(z_0))$
  (from (\ref{eq4}) with $\theta\equiv0$; for this Theorem, $u$ is not
  necessarily non-constant.)  From (\ref{eq95}), where $u$ is defined
  as a composite of a holomorphic function with $G_1$,
  $u\in\cc^r(\Omega_0^2)$.  By Lemma \ref{lem2.3}, there exists a
  holomorphic function $C:\Omega_0^2\to\co$ such that
  $F(v(z))=F(u(z))+C(z)$ and $C(z_0)=0$.  As in the Proof of Lemma
  \ref{thm2.2}, there is some neighborhood of $v(z_0)$,
  $\Omega_2^2\subseteq\Omega_2^1$, where $F$ is one-to-one, so
  $F(\Omega_2^2)$ is open in $\co$ and $H:F(\Omega_2^2)\to\Omega_2^2$
  is a holomorphic local inverse of $F$.  Define this open
  neighborhood of $z_0$,
  $$\Omega_0^3=(F\circ
  u+C)^{-1}(F(\Omega_2^2))=\{z\in\Omega_0^2:F(u(z))+C(z)\in
  F(\Omega_2^2)\}.$$ Then, for all $z\in\Omega_0^3$,
  $F(v(z))=F(u(z))+C(z)\in F(\Omega_2^2)$, and plugging into $H$ gives
  \begin{eqnarray}
    v(z)&=&H(F(u(z))+C(z))\label{eq96}\\ &=&H(G_1(z)+C(z))\nonumber.
  \end{eqnarray}
  It follows from (\ref{eq96}) that $v\in\cc^r(\Omega_0^3)$, which
  since $z_0$ was arbitrary, is enough to show $v\in\cc^r(\Omega_0)$.
\end{proof}
\begin{example}\label{ex3.4}
  Let $f(w)=e^w$, $g(z)\equiv1$, and choose $F(w)=-e^{-w}$, $G(z)=\z$.
  Let $\Omega_2^0$ be a neighborhood of $w_0\in\co$ where $F$ is
  one-to-one, so there is a branch of the complex logarithm which is a
  holomorphic local inverse of $F$, $H(\zeta)=-{\rm Log}(-\zeta)$.
  Then, for any $z_0\in\co$, if $\Omega_1$ is a neighborhood of $z_0$,
  and $v:\Omega_1\to\co$ is continuous, satisfies $(*)$, is a solution
  of $$\frac{\partial v}{\partial\z}=e^v$$ almost everywhere in
  $\Omega_1$, with initial condition $v(z_0)=w_0$, then by Theorem
  \ref{thm2.4}, $v$ is real analytic on $\Omega_1$, and locally near
  $z_0$, $$v(z)=-{\rm
    Log}\left(-(\overline{(z-z_0)}+C(z)-e^{-w_0})\right),$$ for some
  holomorphic function $C(z)$ with $C(z_0)=0$.  Conversely, choosing
  any such $C$ gives an example of a local solution.  One such
  solution, with $z_0=w_0=0$ and $C(z)=z$, is real valued on the
  domain $\{\rp(z)<\frac12\}$, $$v(x+iy)=-\ln(-2x+1).$$ The level
  sets are lines, unlike the isolated points as in Corollary
  \ref{cor4.7}.
\end{example}

\subsection{Simple zeros of $f$}\label{subsec4.2}

\ 

Informally considering the equation $\frac{\partial
  u}{\partial\z}=ug(z)$, the obvious solutions are of the form
$u(z)=B(z)\exp(G(z))$, where $B$ is holomorphic and $G$ is a
$\z$-antiderivative of $g$.  To apply this idea to the more general
separable equation $\frac{\partial u}{\partial\z}=f(u)g(z)$, where $f$
has a simple zero, the following Lemma leads to a useful substitution.

\begin{lem}\label{lem4.5}
  Given $f:\Omega_2\to\co$ holomorphic, with a simple zero at $w_0$,
  there exist a disk $D_{0,r_0}$ and a holomorphic function
  $h:D_{0,r_0}\to\Omega_2$ such that $h(0)=w_0$, $h:D_{0,r_0}\to
  h(D_{0,r_0})$ is invertible, and for $\zeta\in D_{0,r_0}$,
  \begin{equation}\label{eq99}
    f^\prime(w_0)\zeta h^\prime(\zeta)=f(h(\zeta)).
  \end{equation}
\end{lem}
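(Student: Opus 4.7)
The plan is to find $h$ as the local inverse of an explicit holomorphic map $\Phi$ constructed near $w_0$. The first observation is that if $\Phi$ is holomorphic on a neighborhood of $w_0$ with $\Phi(w_0)=0$ and $\Phi'(w_0)\ne 0$, so that $h:=\Phi^{-1}$ exists as a local biholomorphism, then differentiating $\Phi(h(\zeta))=\zeta$ gives $h'(\zeta)=1/\Phi'(h(\zeta))$, and (\ref{eq99}) becomes the identity
\[
  \Phi'(w) \;=\; \frac{f'(w_0)\,\Phi(w)}{f(w)}.
\]
So the problem reduces to producing a holomorphic $\Phi$ satisfying this identity together with $\Phi(w_0)=0$ and $\Phi'(w_0)\ne 0$.

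The construction is a separation-of-variables trick. Since $f$ has a simple zero at $w_0$, the Taylor expansion $f(w)=f'(w_0)(w-w_0)+O((w-w_0)^2)$ shows that
\[
  q(w) \;:=\; \frac{f'(w_0)}{f(w)} \,-\, \frac{1}{w-w_0}
\]
extends holomorphically across $w_0$. Let $Q$ be its holomorphic primitive on some disk around $w_0$, normalized by $Q(w_0)=0$, and set $\Phi(w):=(w-w_0)\,e^{Q(w)}$. Then $\Phi(w_0)=0$ and $\Phi'(w_0)=e^{Q(w_0)}=1$, and a direct computation using the algebraic identity $1+(w-w_0)q(w)=f'(w_0)(w-w_0)/f(w)$ yields
\[
  \Phi'(w) \;=\; e^{Q(w)}\bigl(1+(w-w_0)q(w)\bigr) \;=\; \frac{f'(w_0)\,\Phi(w)}{f(w)},
\]
as required.

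By the holomorphic inverse function theorem, $\Phi$ is a biholomorphism from some neighborhood of $w_0$ (shrunk if necessary to lie in $\Omega_2$) onto a disk $D_{0,r_0}$; defining $h:=\Phi^{-1}$ on $D_{0,r_0}$ then gives the desired map, with $h(0)=w_0$, $h$ injective, and (\ref{eq99}) an immediate consequence of the reformulation above.

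I do not expect a substantial obstacle. The only delicate point is the cancellation that makes $q$ holomorphic across $w_0$, which uses essentially that the zero is simple: the leading singular term of $f'(w_0)/f(w)$ is then exactly $1/(w-w_0)$, and subtracting it produces a removable singularity. If $f$ had a zero of multiplicity greater than one, $f'(w_0)$ would vanish and this direct linearization would break down, consistent with the paper's separation of the higher-order case into a later subsection.
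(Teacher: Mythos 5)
Your proof is correct, but it follows a genuinely different route from the paper. The paper constructs $h$ directly as a power series: after re-centering $f$ it sets up a first-order complex ODE $H'(\zeta)=\mathbf{F}(\zeta,H(\zeta))$ for an auxiliary function $H$, proves convergence of the formal power series solution via Hille's majorization theorem, and then sets $h(\zeta)=w_0+\zeta+\zeta H(\zeta)$. You instead construct the \emph{inverse} map $\Phi=h^{-1}$ in closed form, observing that the functional equation (\ref{eq99}) is equivalent to the linear first-order equation $\Phi'(w)=f'(w_0)\Phi(w)/f(w)$ on the $w$ side, and then solving that by separation of variables: $\Phi(w)=(w-w_0)e^{Q(w)}$ where $Q$ is a primitive of the holomorphic extension of $f'(w_0)/f(w)-1/(w-w_0)$. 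The holomorphic inverse function theorem then hands you $h$. Your route is shorter and more elementary (it avoids the two-variable power series $\mathbf{F}$ and the appeal to Hille's existence theorem), while the paper's route has the virtue of producing the Taylor coefficients of $h$ directly by a recursion in the $f_j$, which supports the remark following the lemma that $h$ ``can be computed in terms of $f$''; with your method one would still need a power series inversion step to get the coefficients of $h$ explicitly. Both proofs hinge on the same essential fact, which you identify: simplicity of the zero at $w_0$ makes $1/(w-w_0)$ the exact principal part of $f'(w_0)/f(w)$, so the residue is $1$ and the subtraction (equivalently, the logarithmic primitive) is well-defined.
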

\begin{proof}
  On some disk $D_{w_0,r_1}\subseteq\Omega_2$,
  $\displaystyle{f(w)=\sum_{j=1}^\infty f_j(w-w_0)^j}$, where by
  hypothesis, $f_1=f^\prime(w_0)\ne0$.  On $D_{w_0,r_1}$, define
  $$\tilde
  f(w)=\sum_{j=2}^\infty\left(\frac{f_j}{f_1}\right)(w-w_0)^j,$$ so
  $\tilde f$ is holomorphic and $f(w)=f_1(w-w_0+\tilde f(w))$.  For
  some $r_2>0$, $r_3>0$ with $r_2(1+r_3)<r_1$, the following
  two-variable function is holomorphic and bounded on the bidisk
  $D_{0,r_2}\times D_{0,r_3}\subseteq\co^2$, defined by an absolutely
  convergent power series:
  \begin{eqnarray}
    {\mathbf F}(W_1,W_2)&=&\left\{\begin{array}{cl}\frac1{W_1^2}\tilde
  f(w_0+W_1+W_1W_2)&W_1\ne0\\
  \frac{f_2}{f_1}(1+W_2)^2&W_1=0\end{array}\right.\nonumber\\
  &=&\sum_{j=2}^\infty\left(\frac{f_j}{f_1}\right)W_1^{j-2}(1+W_2)^j=\sum_{j,\ell}F_{j\ell}W_1^jW_2^\ell.\label{eq101}
  \end{eqnarray}
  The differential equation 
  \begin{equation}\label{eq97}
   H^\prime(\zeta)={\mathbf F}(\zeta,H(\zeta)),
  \end{equation}
   with initial condition $H(0)=0$, has a formal solution
   $\displaystyle{H(\zeta)=\sum_{j=1}^\infty H_j\zeta^j}$, where the
   coefficient sequence $H_j$ is defined uniquely by the coefficients
   $F_{j\ell}$, which uniquely depend on $f_1,f_2,\ldots$ by
   re-centering the power series in step (\ref{eq101}).  The series is
   convergent (\cite{hille2} Theorem 2.5.1, proved by a majorization
   method), so $H$ is holomorphic on some disk $D_{0,r_4}$.  Define
   $$h(\zeta)=w_0+\zeta+\zeta H(\zeta),$$ so $h$ is holomorphic and
   invertible on some disk $D_{0,r_0}$, and by construction,
   satisfies:
  \begin{eqnarray*}
    f^\prime(w_0)\zeta h^\prime(\zeta)&=&f_1\zeta(1+H(\zeta)+\zeta H^\prime(\zeta))\\
    &=&f_1(\zeta+\zeta H(\zeta)+\zeta^2{\mathbf F}(\zeta,H(\zeta)))\\
    &=&f_1(h(\zeta)-w_0+\tilde f(w_0+\zeta+\zeta H(\zeta)))\\
    &=&f(h(\zeta)).
  \end{eqnarray*}
\end{proof}
\begin{rem}\label{rem4.6}
  Equation (\ref{eq99}) is a special case of an equation considered by
  \cite{hille2} \S11.1; the above proof shows how $h$ can be computed
  in terms of $f$.
\end{rem}
\begin{thm}\label{lem4.6}
  Given $f:\Omega_2\to\co$ holomorphic with only simple zeros,
  $g:\Omega_1\to\co$, and any points $z_0\in\Omega_1$,
  $w_0\in\Omega_2$, if there is a continuous function
  $G:\Omega_1\to\co$ such that $\frac{\partial G}{\partial\z}=g(z)$,
  then there exists a non-constant, continuous function
  $u:\Omega_1^0\to\Omega_2$ on some neighborhood of $z_0$,
  $\Omega_1^0\subseteq\Omega_1$, such that $\frac{\partial
    u}{\partial\z}=f(u)g(z)$ and $u(z_0)=w_0$.
\end{thm}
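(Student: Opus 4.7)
The plan is to reduce to the linear case via a suitable change of variable near $w_0$, splitting into two cases according to whether $f(w_0)=0$.  If $f(w_0)\neq 0$, then there is a simply connected neighborhood $\Omega_2^0\subseteq\Omega_2$ of $w_0$ on which $f$ is nonvanishing, so a holomorphic primitive of $1/f$ exists on $\Omega_2^0$.  Lemma \ref{thm2.2}, applied to the restriction of $f$ to $\Omega_2^0$ and to $g$ (with the antiderivative $G$ already in hand), then directly produces a non-constant continuous solution $u$ on some neighborhood $\Omega_1^0\ni z_0$ with $u(z_0)=w_0$.

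The substantive case is $f(w_0)=0$, which by hypothesis is a simple zero, so $f'(w_0)\neq 0$.  Invoke Lemma \ref{lem4.5} to obtain a disk $D_{0,r_0}$ and a holomorphic invertible map $h:D_{0,r_0}\to h(D_{0,r_0})\subseteq\Omega_2$ with $h(0)=w_0$ satisfying (\ref{eq99}).  The key idea is to seek $u$ in the form $u=h\circ\zeta$; by the holomorphic chain rule $u_{\z}=h'(\zeta)\zeta_{\z}$, so the equation $u_{\z}=f(u)g(z)$ becomes $h'(\zeta)\zeta_{\z}=f'(w_0)\zeta h'(\zeta)g(z)$ after applying (\ref{eq99}), and will hold if $\zeta$ satisfies the linear, homogeneous equation $\zeta_{\z}=f'(w_0)g(z)\zeta$ with $\zeta(z_0)=0$.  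Following the standard ansatz for such linear equations, I would set, on a sufficiently small neighborhood $\Omega_1^0$ of $z_0$,
\[
  \zeta(z)=(z-z_0)\exp\bigl(f'(w_0)(G(z)-G(z_0))\bigr),\qquad u(z)=h(\zeta(z)),
\]
choosing $\Omega_1^0$ small enough that $\zeta(\Omega_1^0)\subseteq D_{0,r_0}$; this is possible because $\zeta$ is continuous with $\zeta(z_0)=0$.

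To finish, I would verify: (i) $u(z_0)=h(0)=w_0$; (ii) using the pointwise existence of $G_x$ and $G_y$ implicit in $G_{\z}=g$, the product and chain rules give $\zeta_{\z}=f'(w_0)g(z)\zeta(z)$ at every $z\in\Omega_1^0$ (the holomorphic factor $z-z_0$ contributing nothing to the $\z$-derivative), whence $u_{\z}=h'(\zeta)\zeta_{\z}=f'(w_0)\zeta h'(\zeta)g(z)=f(h(\zeta))g(z)=f(u)g(z)$ on $\Omega_1^0$; and (iii) $u$ is non-constant, because $\zeta$ vanishes only at $z_0$ in $\Omega_1^0$ (the exponential factor being nowhere zero) and $h$ is invertible on $D_{0,r_0}$.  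The main obstacle is the existence of the substitution $h$, furnished by Lemma \ref{lem4.5}: without it, the nonlinear equation near a zero of $f$ would not decouple into a linear problem for $\zeta$, and this explicit ansatz would be unavailable.
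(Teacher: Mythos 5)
Your proof is correct and takes essentially the same approach as the paper: split on whether $f(w_0)=0$, use Lemma \ref{thm2.2} in the nonvanishing case, and in the simple-zero case invoke Lemma \ref{lem4.5} to get $h$ and set $u=h\circ\bigl((z-z_0)\exp(f'(w_0)G(z))\bigr)$ (the paper omits the innocuous constant $-f'(w_0)G(z_0)$ in the exponent, which only rescales the auxiliary function by a nonzero constant). The verification steps match the paper's.
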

\begin{proof}
  For the case where $f$ is non-vanishing at $w_0$, Lemma \ref{thm2.2}
  applies locally near $w_0$, so we assume that $f$ has a zero of
  order $1$ at $w_0$.

  Let $f_1=f^\prime(w_0)$ and $h:D_{0,r_0}\to\Omega_2$ be as in Lemma
  \ref{lem4.5}.  The function $U(z)=(z-z_0)\exp(f_1G(z))$ is
  continuous on $\Omega_1$, with $U(z_0)=0$.  Let
  $\Omega_1^0=U^{-1}(D_{0,r_0})$, and define
  $u=h\circ(U|_{\Omega_1^0})$, so that
  \begin{equation}\label{eq98}
    u(z)=h(U(z))=h((z-z_0)\exp(f_1G(z)))
  \end{equation}
  is continuous on $\Omega_1^0$, and its partial derivatives satisfy,
  using (\ref{eq99}):
  \begin{eqnarray*}
    \frac{\partial
    u}{\partial\z}&=&h^\prime((z-z_0)\exp(f_1G(z)))(z-z_0)\exp(f_1G(z))f_1\frac{\partial
    G}{\partial\z}\\ &=&f(h((z-z_0)\exp(f_1G(z))))g(z)=f(u)g(z),\\
    \left.\frac{\partial u}{\partial
    z}\right|_{z=z_0}&=&h^\prime(0)\exp(f_1G(z_0))\ne0.
  \end{eqnarray*}
\end{proof}
As remarked after Lemma \ref{thm2.2}, for any $f$, the solution $u$
constructed in (\ref{eq98}) has the same $\cc^r$ regularity as the
antiderivative $G$.

The following Theorem is a generalization of Theorem \ref{thm2.4};
again our interest is in computing a local formula (\ref{eq100}) for
any continuous solution.
\begin{thm}\label{thm4.7}
  Let $f:\Omega_2\to\co$ be holomorphic, with only simple zeros, and
  suppose $g:\Omega_1\to\co$ is equal to $\frac{\partial
  G}{\partial\z}$ for some $G\in\cc^r(\Omega_1)$,
  $r=0,1,2,\ldots,\infty,\omega$.  If $v:\Omega_1\to\Omega_2$ is
  continuous, satisfies {\rm{$(*)$}}, and $\frac{\partial
  v}{\partial\z}=f(v)g(z)$ almost everywhere in $\Omega_1$, then
  $v\in\cc^r(\Omega_1)$.
\end{thm}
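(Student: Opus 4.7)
The plan is to localize: at each $z_0\in\Omega_1$, if $f(v(z_0))\neq 0$ then Theorem \ref{thm2.4} applies directly on a neighborhood; if $f(v(z_0))=w_0$ is a zero of $f$ (necessarily simple, by hypothesis), then the substitution supplied by Lemma \ref{lem4.5} converts (\ref{eq30}) into a linear $\z$-equation that can be solved via Proposition \ref{prop2.4} (Looman-Menchoff). The case $r=0$ is already part of the hypothesis, so we assume $r\ge 1$; then $G\in\cc^r(\Omega_1)$ gives $g=\partial G/\partial\z$ as a continuous, everywhere-defined function.

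Fix $z_0$ with $v(z_0)=w_0$ and $f(w_0)=0$. By Lemma \ref{lem4.5} there is a holomorphic invertible $h:D_{0,r_0}\to h(D_{0,r_0})\subseteq\Omega_2$ with $h(0)=w_0$ and
\[ f_1\,\zeta\, h^\prime(\zeta)=f(h(\zeta)),\qquad f_1:=f^\prime(w_0)\neq 0. \]
Let $\psi=h^{-1}$ and choose a neighborhood $U_0$ of $z_0$ so small that $v(U_0)\subseteq h(D_{0,r_0})$. Put $w(z)=\psi(v(z))$, which is continuous with $w(z_0)=0$. Off the countable exceptional set of $v$ from $(*)$, the classical chain rule gives $w_x=\psi^\prime(v)v_x$ and $w_y=\psi^\prime(v)v_y$, so $w$ inherits property $(*)$ on $U_0$. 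At such points
\[ w_{\z}=\psi^\prime(v(z))\,v_{\z}=\frac{v_{\z}}{h^\prime(w(z))}, \]
and using $v_{\z}=_{a.e.}f(h(w))g$ together with the identity of Lemma \ref{lem4.5} yields $w_{\z}=_{a.e.}f_1\,w(z)\,g(z)$ on $U_0$.

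Now introduce $\sigma(z)=w(z)\exp(-f_1 G(z))$ on $U_0$. Since $G\in\cc^r$ with $r\ge 1$, the factor $\exp(-f_1 G)$ is $\cc^1$ everywhere, so the product rule applies off the same countable exceptional set and
\[ \sigma_{\z}=w_{\z}\exp(-f_1 G)-f_1\,w\,g\,\exp(-f_1 G)=_{a.e.}0. \]
Thus $\sigma$ is continuous on $U_0$, satisfies $(*)$, and has pointwise $\z$-derivative vanishing almost everywhere, so Proposition \ref{prop2.4} implies $\sigma$ is holomorphic on $U_0$. Solving back gives the explicit local formula
\[ v(z)=h\bigl(\sigma(z)\exp(f_1 G(z))\bigr), \]
analogous to (\ref{eq96}), and since $h$ and $\sigma$ are holomorphic while $G\in\cc^r$, we conclude $v|_{U_0}\in\cc^r(U_0)$. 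Combined with the application of Theorem \ref{thm2.4} at points with $f(v(z_0))\neq 0$, and the fact that $z_0$ was arbitrary, this proves $v\in\cc^r(\Omega_1)$.

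The main technical obstacle is the bookkeeping for property $(*)$: one must verify that composing with the holomorphic $\psi$ and multiplying by the $\cc^1$ factor $\exp(-f_1 G)$ neither enlarges the exceptional set nor destroys the a.e.\ pointwise $\z$-derivative identity, so that Proposition \ref{prop2.4} remains applicable to $\sigma$. This is parallel to the chain-rule manipulation in the proof of Lemma \ref{lem2.3}, but has to be done for the auxiliary functions $w$ and $\sigma$ rather than for $v$ itself.
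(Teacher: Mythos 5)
Your proof is correct and follows essentially the same route as the paper: the paper also applies Theorem \ref{thm2.4} off the zero set of $f\circ v$, and near a simple zero defines $B(z)=h^{-1}(v(z))\exp(-f_1 G(z))$ (your $\sigma$), shows $\partial B/\partial\z=0$ a.e., invokes Proposition \ref{prop2.4}, and recovers $v=h(B\exp(f_1G))$. The only cosmetic differences are that you split $B$ into the intermediate substitution $w=h^{-1}\circ v$ and explicitly dispose of the trivial case $r=0$ before assuming $G\in\cc^1$, a restriction the paper avoids by applying the pointwise product rule directly off the exceptional set.
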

\begin{proof}
  Let $z_0$ be an arbitrary point in $\Omega_0$.  If $f(v(z_0))\ne0$,
  then Theorem \ref{thm2.4} applies, to show that there is some
  neighborhood of $z_0$ where $v$ is $\cc^r$.  Otherwise, $w_0=v(z_0)$
  is a simple zero of $f$, so Lemma \ref{lem4.5} applies to give $h$
  on $D_{0,r_0}$.  Let $\Omega_1^1=v^{-1}(h(D_{0,r_0}))$ be a
  neighborhood of $z_0$ in $\Omega_1$.  The function
  $B:\Omega_1^1\to\co$ defined by 
  \begin{equation}\nonumber
    B(z)=h^{-1}(v(z))\exp(-f_1G(z))
  \end{equation}
  is continuous, satisfies $(*)$ and $B(z_0)=0$, and almost everywhere
  in $\Omega_1^1$,
  \begin{eqnarray}
    &&\frac{\partial
      B}{\partial\z}\nonumber\\ &=_{a.e.}&\frac{\partial
      v/\partial\z\exp(-f_1G(z))}{h^\prime(h^{-1}(v(z)))}+h^{-1}(v(z))\exp(-f_1G(z))(-f_1)\frac{\partial
      G}{\partial\z}\nonumber\\ &=_{a.e.}&\frac{f(v)g(z)\exp(-f_1G(z))}{h^\prime(h^{-1}(v(z)))}-B(z)f_1g(z)\nonumber\\ &=&f_1h^{-1}(v(z))g(z)\exp(-f_1G(z))-B(z)f_1g(z)\equiv0.\nonumber
  \end{eqnarray}
  The conclusion is that, on $\Omega_1^1$, $B$ is holomorphic by
  Proposition \ref{prop2.4}, and
  \begin{equation}\label{eq100}
    v(z)=h(B(z)\exp(f_1G(z)))
  \end{equation}
  is $\cc^r$, so $v$ is $\cc^r$ on a neighborhood of every point in
  $\Omega_1$.
\end{proof}
Any holomorphic $B$ with $B(z_0)=0$ in (\ref{eq100}) gives a local
solution $v$, by the same calculation as the example in Theorem
\ref{lem4.6}.

\subsection{Zeros of $f$ with higher multiplicity}\label{subsec3}

\

The result of the following Theorem is a local implicit formula
(\ref{eq70}) for continuous solutions $u$ of the separable equation
$u_{\z}=f(u)g(z)$, when $f$ has a zero of order $>1$.  Unlike the
expression from Equation (\ref{eq100}), involving a substitution
function $h$ depending on $f$, the expression in Equation (\ref{eq70})
uses only antiderivatives $F$ and $G$ of the given factors $f$ and
$g$.
\begin{thm}\label{thm2.9}
  Given an open set $\Omega_1\subseteq\co$, $p>2$, and $g\in
  L^p_{loc}(\Omega_1)$, suppose there is some $G:\Omega_1\to\co$ so
  that $G$ is continuous, satisfies {\rm{$(*)$}}, and $\frac{\partial
    G}{\partial\z}=g(z)$ almost everywhere.  Let $f:\Omega_2\to\co$ be
  continuous with $f(w_0)=0$, and suppose for some disk
  $D_{w_0,r}\subseteq\Omega_2$, there is
  $F:D_{w_0,r}\setminus\{w_0\}\to\co$ so that $F$ is holomorphic and
  satisfies $F^\prime(w)=\frac1{f(w)}$.  For any $z_0\in\Omega_1$, if
  there exist a neighborhood of $z_0$, $\Omega_1^0\subseteq\Omega_1$,
  and a non-constant, continuous function $u:\Omega_1^0\to\Omega_2$
  satisfying {\rm{$(*)$}}, $u(z_0)=w_0$, and $\frac{\partial
    u}{\partial\z}=f(u)g(z)$ almost everywhere on $\Omega_1^0$, then
  there exist an integer $M\ge1$ and a nonvanishing holomorphic
  function $\phi(z)$ on some neighborhood of $z_0$, with
  \begin{equation}\label{eq70}
    (z-z_0)^{M}F(u(z))=\phi(z)+(z-z_0)^{M}G(z).
  \end{equation}
\end{thm}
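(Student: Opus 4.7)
The plan is to study the auxiliary function $\psi(z) := F(u(z)) - G(z)$ on a punctured neighborhood of $z_0$, to show that $\psi$ is holomorphic there with a pole at $z_0$ of some order $M \ge 1$, and then to read off (\ref{eq70}) by multiplying through by $(z-z_0)^M$ and rearranging. As a preliminary, I would unpack the structure of $f$ and $F$ near $w_0$: since $F' = 1/f$ is holomorphic and nonvanishing on $D_{w_0,r}\setminus\{w_0\}$, $f$ is itself holomorphic on the punctured disk and extends holomorphically across $w_0$ by Riemann's removable singularity theorem. The existence of the single-valued antiderivative $F$ on the punctured disk forces the residue of $1/f$ at $w_0$ to vanish, so the zero of $f$ at $w_0$ has order $k \ge 2$ and $F$ has a pole of order $k-1$ there; in particular $F(w) \to \infty$ as $w \to w_0$. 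Moreover $f(w)/(w-w_0)$ extends holomorphically across $w_0$, so $A(z) := g(z)\,f(u(z))/(u(z)-w_0)$ lies in $L^p_{loc}$ of $\Omega_1^0 \setminus Z_0$, and $u_{\z} = (u-w_0) A(z)$ almost everywhere there.

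The first real step is to apply Corollary \ref{cor4.7} on a connected neighborhood of $z_0$. Since $u$ is non-constant, the alternative $Z_0 = \Omega_1^0$ is excluded, so $Z_0$ is a discrete set and $z_0$ is isolated. Choose a connected neighborhood $\tilde\Omega$ of $z_0$ in $\Omega_1^0$ with $u(\tilde\Omega) \subseteq D_{w_0,r}$ and $u^{-1}(w_0)\cap\tilde\Omega = \{z_0\}$. The second step is to apply the Looman-Menchoff Theorem (Proposition \ref{prop2.4}) to $\psi$ on the open set $\tilde\Omega \setminus \{z_0\}$: there $F\circ u$ is continuous, and because $F$ is $\cc^\infty$ on $D_{w_0,r}\setminus\{w_0\}$, the composition $F\circ u$ inherits property $(*)$ from $u$; at every point where $u_x$ and $u_y$ exist (with $u(z)\ne w_0$), the classical chain rule yields
\[
(F\circ u)_{\z} = F'(u(z))\,u_{\z} = \frac{1}{f(u(z))}\,f(u(z))\,g(z) = g(z).
\]
Since also $G_{\z} = g$ a.e., $\psi_{\z} = 0$ a.e.\ on $\tilde\Omega\setminus\{z_0\}$, and Proposition \ref{prop2.4} gives that $\psi$ is holomorphic there.

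The final step identifies the isolated singularity. By Theorem \ref{lem4.3} applied to $G$, $G\in \cc^{0,\alpha}_{loc}$ and is in particular bounded near $z_0$, while $|F(u(z))| \to \infty$ as $z \to z_0$ since $F$ has a pole at $w_0$ and $u(z) \to w_0$. Thus $|\psi(z)| \to \infty$, so $z_0$ is a pole of $\psi$ of some order $M \ge 1$, and $\phi(z) := (z-z_0)^M \psi(z)$ extends holomorphically across $z_0$ with $\phi(z_0)\ne 0$. Rearranging,
\[
(z-z_0)^M F(u(z)) = \phi(z) + (z-z_0)^M G(z),
\]
as desired. The main technical obstacle is verifying that $F\circ u$ satisfies $(*)$ and obeys the pointwise chain rule almost everywhere, so that Proposition \ref{prop2.4} legitimately applies; however, having restricted to $\tilde\Omega\setminus\{z_0\}$, where $u$ avoids the singular value $w_0$ and $F$ is smooth, this verification is routine, and the bulk of the proof is carried by Corollary \ref{cor4.7} (which gives the isolation of $z_0$) and Proposition \ref{prop2.4} (which upgrades pointwise a.e.\ information into genuine holomorphicity of $\psi$).
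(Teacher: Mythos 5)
Your proof is correct and reaches the same conclusion, but by a genuinely different route at the final step. The paper's argument uses the explicit product representation $u(z)-w_0 = e^{U(z)}\sigma(z)$ (formula (\ref{eq71}) from inside the proof of Corollary \ref{cor4.7}, with $\sigma$ holomorphic and $U$ H\"older continuous) to read off the order of vanishing $m$ of $u-w_0$ at $z_0$; combined with the pole order $k-1$ of $F$ at $w_0$, this fixes the exponent $M = m(k-1)$ in advance, and one then checks that $\phi = (z-z_0)^M\bigl(F(u(z))-G(z)\bigr)$ is continuous and nonvanishing on a full neighborhood of $z_0$ before applying Proposition \ref{prop2.4} there. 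You instead set $\psi = F\circ u - G$, apply Proposition \ref{prop2.4} only on the punctured neighborhood of $z_0$ (where its hypotheses are easy to verify, since $u$ avoids $w_0$ and $F$ is smooth there), and then invoke the classical classification of isolated singularities: since $|\psi|\to\infty$ as $z\to z_0$, the singularity is a pole, and $M$ is simply its order. Your route is leaner in that it uses only the statement of Corollary \ref{cor4.7} (discreteness of $Z_0$), not the internal formula (\ref{eq71}) that the paper deliberately exposed for this purpose, and it lets single-variable complex analysis absorb the work of controlling $\psi$ near $z_0$. The tradeoff is that the paper's version yields the explicit value $M=m(k-1)$ --- recorded in the remark immediately after the theorem as depending on the vanishing orders of $u$ and $f$ --- whereas your argument establishes only that some $M\ge 1$ exists.
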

\begin{proof}
  From $F^\prime=\frac1f$, $f$ is holomorphic and nonvanishing on
  $D_{w_0,r}\setminus\{w_0\}$, and because $f$ is continuous, $f$ is
  holomorphic on $D_{w_0,r}$.  Let $k\ge1$ be the order of vanishing
  of $f(w)$ at $w_0$, so there is a series expression converging on
  $D_{w_0,r}$,
  \begin{equation}\label{eq9}
    f(w)=(w-w_0)^k(f_k+\sum_{j=k+1}^\infty f_{j}(w-w_0)^{j-k})
  \end{equation}
  with $f_k\ne0$.  The reciprocal has a Laurent expansion
  \begin{equation}\label{eq10}
    \frac{1}{f(w)}=(w-w_0)^{-k}(\frac1{f_k}+\sum_{\ell=1}^\infty
    q_{\ell}(w-w_0)^\ell).
  \end{equation}
  The existence of the primitive $F$ is equivalent to $k>1$ and
  $q_{k-1}=0$ (this is the Residue of $\frac1f$ at $w_0$).  By
  integrating the above Laurent series, any holomorphic primitive $F$
  has a pole of order exactly $k-1$ at $w_0$.  So,
  \begin{equation}\label{eq91}
    (w-w_0)^{k-1}F(w)
  \end{equation}
   extends to a holomorphic function on $D_{w_0,r}$, which is
  nonvanishing on some possibly smaller disk $D_{w_0,r_0}$; denote the
  extension $\tilde F:D_{w_0,r_0}\to\co\setminus\{0\}$.

  Let $R_0\Subset u^{-1}(D_{w_0,r_0})\subseteq\Omega_1^0$ be a
  neighborhood of $z_0$, so that, using (\ref{eq9}), $u$ satisfies the
  hypotheses of Corollary \ref{cor4.7} on $R_0$.  As in (\ref{eq71}),
  on a neighborhood of $z_0$, $R_1\Subset R_0$,
  $u(z)-w_0=e^{U}\sigma$, where $\sigma$ is holomorphic on $R_1$ and
  $U\in\cc^{0,\alpha}(R_1)$.  On $R_1$, the composite $\tilde F(u(z))$
  is continuous and nonvanishing.  There is a neighborhood of $z_0$,
  $\Omega_1^1\subseteq R_1$, where $z_0$ is the only point where
  $u(z)=w_0$, the holomorphic factor $\sigma$ has a series expansion
  at $z_0$ with order of vanishing $m\ge1$, and
  \begin{equation}\label{eq7}
    u(z)-w_0=(z-z_0)^mp(z),
  \end{equation}
  for some nonvanishing continuous function $p(z)$.  An expression for
 $\tilde F(u(z))$ can be computed on $\Omega_1^1\setminus\{z_0\}$,
 using (\ref{eq91}) and (\ref{eq7}):
  $$\tilde
  F(u(z))=(u(z)-w_0)^{k-1}F(u(z))=((z-z_0)^mp(z))^{k-1}F(u(z)).$$ It
  follows that the product $$(z-z_0)^{m(k-1)}F(u(z))$$ extends from
  $\Omega_1^1\setminus\{z_0\}$ to a nonvanishing, continuous function
  on $\Omega_1^1$, and by the continuity of $G$, the expression
  \begin{equation*}
    \phi=(z-z_0)^{m(k-1)}F(u(z))-(z-z_0)^{m(k-1)}G(z)
  \end{equation*}
   is also nonvanishing and continuous on some neighborhood of $z_0$,
   $\Omega_1^2\subseteq\Omega_1^1$.  For all $z$ except $z_0$ and
   possibly countably many more from the exceptional sets from $(*)$
   for $u$ and $G$,
  \begin{eqnarray*}
    \frac{\partial}{\partial\z}\phi(z)&=&\frac{\partial}{\partial\z}\left((z-z_0)^{m(k-1)}(F(u(z))-G(z))\right)\\
    &=&(z-z_0)^{m(k-1)}\left(F^\prime(u(z))\frac{\partial
    u}{\partial\z}-\frac{\partial G}{\partial\z}\right)\\
    &=_{a.e.}&(z-z_0)^{m(k-1)}\left(\frac1{f(u(z))}f(u(z))g(z)-g(z)\right)\equiv0,
  \end{eqnarray*}
  so by Proposition \ref{prop2.4}, $\phi$ is holomorphic on
  $\Omega_1^2$.
\end{proof}
Note that the exponent $M=m(k-1)$ depends on the order of vanishing of
$u$ and $f$, but not on the choices of primitives $F$ and $G$.
\begin{example}\label{ex2.10}
  Let $0<\alpha<1$, $g=|z|^{-1+\alpha}\in L^p_{loc}(\co)$, $f(w)=w^2$,
  and $w_0=0$, so $k=2$ and $p>2$.  Choose antiderivatives
  $F(w)=-w^{-1}$, and $G(z)=\frac2{1+\alpha}\z|z|^{-1+\alpha}$
  extended to $G(0)=0$.  Then for any $z_0$, if $u$ is a non-constant,
  continuous solution of
  \begin{equation}\label{eq89}
    \frac{\partial u}{\partial\z}=u^2|z|^{-1+\alpha}
  \end{equation}
   almost everywhere, satisfying $(*)$ and $u(z_0)=0$, then there
   exist a positive integer $m$ and a holomorphic function $\phi$ with
   $\phi(z_0)\ne0$, so that for non-zero $z$ near $z_0$,
  \begin{eqnarray}
    (z-z_0)^m(-u(z))^{-1}&=&\phi(z)+(z-z_0)^mG(z)\label{eq90}\\ \implies
    u(z)&=&\displaystyle{\frac{-(z-z_0)^m}{\phi(z)+(z-z_0)^m\frac2{1+\alpha}\z|z|^{-1+\alpha}}}.\label{eq88}
  \end{eqnarray}
  In this case, choosing any $m$ and $\phi$ gives an example of a
  local solution $u$ with order of vanishing $m$ as in (\ref{eq7}).
  When extended by continuity to $u(0)=\frac{-(-z_0)^m}{\phi(0)}$, $u$
  is H\"older continuous on rectangles, as in Theorem \ref{lem4.3},
  and if $g$ and $u$ are restricted to a domain not containing $z=0$,
  then $g$ and $u$ are real analytic, as in Corollary \ref{thm4.2}.
\end{example}
\begin{example}\label{ex3.7}
  If, in Example \ref{ex2.10}, $\alpha=1$, then (\ref{eq89}) becomes
  the autonomous equation $\displaystyle{\frac{\partial
  u}{\partial\z}=u^2}$.  All solutions with initial condition
  $u(z_0)=0$ are real analytic, but the form of the solution set does
  not change: non-constant solutions still satisfy $(\ref{eq88})$,
  with $\alpha=1$.  Equations with higher powers,
  $\displaystyle{\frac{\partial u}{\partial\z}=u^k}$, have implicit
  solutions (\ref{eq70}) similar to (\ref{eq90}), but require
  selecting a local root to get an explicit solution for $u$ as in
  (\ref{eq88}).
\end{example}

\section{An application to almost complex geometry}\label{sec3}

\subsection{Normal coordinates in $\re^4$}

\ 

Let $J(\vec x)$ be a smooth almost complex structure on a neighborhood
of the origin in $\re^4$.  For example, if $J_{std}$ is the $2\times2$
constant matrix $\left[\begin{array}{cc}0&-1\\1&0\end{array}\right]$,
then the constant matrix
$J_0=\left[\begin{array}{cc}J_{std}&0\\0&J_{std}\end{array}\right]_{4\times4}$
is the standard complex structure operator for $\co^2=(\re^4,J_0)$.

For an open set $\Omega\subseteq\co=(\re^2,J_{std})$, a
$J$-holomorphic curve is a differentiable map ${\bf
u}:\Omega\to\re^4$, so that the differential $d{\bf u}$ satisfies
$d{\bf u}(x,y)\circ J_{std}=J({\bf u}(x,y))\circ d{\bf u}(x,y)$.

We very briefly recall the geometric construction of ``normal
coordinates'' from \cite{sikorav}, \cite{st1}, \cite{taubes}, but
then, starting with Equation (\ref{eq60}), go into some detail
regarding computations in this coordinate system.

Near a given point $Z_0$ on an embedded $J$-holomorphic curve ${\bf
  u}$, there exists a family of local perturbations of the curve,
parametrized by a complex variable $w$, which together with a complex
coordinate $\zeta$ for the original curve, defines a smooth local
coordinate system $(\zeta,w)$, with $Z_0$ at the origin.  The matrix
representation of $J$ in this coordinate system is:
  \begin{equation}\label{eq58}
    J(\zeta,w)=\left[\begin{array}{cc}J_{std}&B_1\\0&J_{std}+B_2\end{array}\right],
  \end{equation}
  where the blocks $B_1$, $B_2$ are smooth $2\times2$ matrix functions
  of the coordinates $(\zeta,w)$, satisfying $B_1(\zeta,0)=0$ and
  $B_2(0,0)=0$, so $J(0,0)=J_0$.  By construction, the previously
  given curve ${\bf u}$ in these coordinates is the complex $\zeta$-axis,
  parametrized by $z\mapsto(z,0)$, and the nearby $J$-holomorphic
  curves are parametrized by $z\mapsto(z,c)$, for complex constants
  $c$.  The mapping $z\mapsto(\zeta,w)=(h(z),c)$ is $J$-holomorphic
  for any holomorphic $h$ and constant $c$.

The real entries of the $4\times4$ matrix (\ref{eq58}) (depending on
$\zeta$, $w$) are constrained by the property $J^2=-Id_{\re^4}$, so
they must be of the following form.  It can be assumed that $|b_2|<1$
for $(\zeta,w)$ near $\vec0$:
  \begin{equation}\label{eq60}
    J(\zeta,w)=\left[\begin{array}{cccc}0&-1&a_1&a_2\\1&0&\frac{a_1b_1b_2-a_2b_1^2-a_1b_1-a_2}{b_2-1}&a_1b_2-a_2b_1-a_1\\0&0&b_1&-1+b_2\\0&0&1+\frac{b_1^2+b_2}{1-b_2}&-b_1\end{array}\right].
  \end{equation}
The $-i$ eigenspace can be calculated (\ref{eq67}) and then written in
complex coordinates with smooth complex coefficients $\beta_1$,
$\beta_2$:
\begin{eqnarray}
  T^{0,1}&=&\mbox{span}_{\co}\left\{\frac{\partial}{\partial\bar\zeta},\frac{\partial}{\partial\w}+\beta_1\frac{\partial}{\partial
    w}+\beta_2\frac{\partial}{\partial
    \zeta}\right\}\label{eq59}\\ \beta_1(\zeta,w)&=&\frac{b_2-ib_1}{b_2-2+ib_1}\nonumber\\ \beta_2(\zeta,w)&=&\frac{a_2+i(a_1b_2-a_2b_1-a_1)}{b_2-2+ib_1}\nonumber.
\end{eqnarray}
Conversely, given complex coefficients $\beta_1$, $\beta_2$ in an
expression of the form (\ref{eq59}) with $\left|\beta_1\right|<1$, the
real entries $a_1$, $a_2$, $b_1$, $b_2$ in a complex structure
operator of the form (\ref{eq60}) are uniquely determined by:
\begin{eqnarray*}
  a_1+ia_2&=&\frac{2i(\beta_1\overline{\beta_2}+\beta_2)}{\beta_1\overline{\beta_1}-1}\\
  b_1+ib_2&=&\frac{2i\beta_1(\overline{\beta_1}+1)}{\beta_1\overline{\beta_1}-1}.
\end{eqnarray*}
In terms of $\beta_1$, $\beta_2$, the matrix (\ref{eq60}) for $J(\zeta,w)$ is:
\begin{equation*}
    \left[\begin{array}{cccc}0&-1&\frac{2(\sip(\beta_2)\srp(\beta_1)-\sip(\beta_1)\srp(\beta_2)-\sip(\beta_2))}{|\beta_1|^2-1}&\frac{2(\sip(\beta_2)\sip(\beta_1)+\srp(\beta_2)\srp(\beta_1)+\srp(\beta_2))}{|\beta_1|^2-1}\\1&0&-\frac{2(\sip(\beta_2)\sip(\beta_1)+\srp(\beta_2)\srp(\beta_1)-\srp(\beta_2))}{|\beta_1|^2-1}&\frac{2(\sip(\beta_2)\srp(\beta_1)-\sip(\beta_1)\srp(\beta_2)+\sip(\beta_2))}{|\beta_1|^2-1}\\0&0&-\frac{2\sip(\beta_1)}{|\beta_1|^2-1}&-1+\frac{2(|\beta_1|^2+\srp(\beta_1))}{|\beta_1|^2-1}\\0&0&1-\frac{2(|\beta_1|^2-\srp(\beta_1))}{|\beta_1|^2-1}&\frac{2\sip(\beta_1)}{|\beta_1|^2-1}\end{array}\right].
  \end{equation*}
The integrability condition $[T^{0,1},T^{0,1}]\subseteq T^{0,1}$ is
satisfied when $\frac{\partial\beta_1}{\partial\bar\zeta}$ and
$\frac{\partial\beta_2}{\partial\bar\zeta}$ are both $0$, so
$\beta_1$, $\beta_2$ are holomorphic in $\zeta$.

If $\vec f:\Omega\to\re^4$ is a real variable parametrization, $\vec
f(x,y)=(f^1,f^2,f^3,f^4)$, of a $J$-holomorphic curve in a
neighborhood of $\vec0\in\re^4$, then
\begin{eqnarray}
  d\vec f(x,y)\circ J_{std}&=&J(\vec f(x,y))\circ d\vec
  f(x,y)\label{eq84}\\ \implies\frac{\partial\vec f}{\partial
    y}&=&J(\vec f(x,y))\frac{\partial\vec f}{\partial x}.\label{eq77}
\end{eqnarray}
If the parametric equation is
written in complex form as
\begin{equation}\label{eq80}
  {\bf u}:z\mapsto(\zeta,w)=(h(z),k(z))=(f^1+if^2,f^3+if^4),
\end{equation}
then the $\z$-derivatives of the components are related to the
$z$-derivatives using a $2\times2$ complex matrix ${\bf Q}(\zeta,w)$,
in the following complex nonlinear system of equations:
\begin{equation}\label{eq82}
  \left[\begin{array}{c}h_{\z}\\k_{\z}\end{array}\right]={\bf
    Q}(h,k)\left[\begin{array}{c}\overline{h_{z}}\\\overline{k_{z}}\end{array}\right].
\end{equation}
The calculation deriving ${\bf Q}$ in terms of $J$ is well-known
(\cite{is}, \cite{sikorav}).  However, in this coordinate system, it
is more convenient to express the entries of ${\bf Q}$ in terms of the
coefficients $\beta_1$, $\beta_2$ from the complex eigenvectors
(\ref{eq59}).  

\begin{lem}\label{lem5.1}
  In a coordinate system for a neighborhood of $\re^4$ where $J$ is of
  the form {\rm{(\ref{eq58})}} with complex eigenvectors as in
  {\rm{(\ref{eq59})}}, the matrix ${\bf Q}$ from {\rm{(\ref{eq82})}}
  is of the form
\begin{equation}\label{eq83}
{\bf
  Q}(\zeta,w)=\left[\begin{array}{cc}0&\beta_2(\zeta,w)\\0&\beta_1(\zeta,w)\end{array}\right].
\end{equation}
\end{lem}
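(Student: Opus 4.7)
The plan is to derive (\ref{eq82})--(\ref{eq83}) directly from the eigenspace reformulation of $J$-holomorphicity, rather than by unwinding the real $4\times 4$ matrix (\ref{eq60}) entry by entry. Equation (\ref{eq84}) is equivalent to $d{\bf u}$ sending the $-i$-eigenspace of $J_{std}$ into the $-i$-eigenspace of $J$ at the image point. Since the source $T^{0,1}$ is spanned over $\co$ by $\partial/\partial\z$, the entire content of the equation is that $d{\bf u}(\partial/\partial\z)$ lies in $T^{0,1}_{{\bf u}(z)}$, which by (\ref{eq59}) is spanned over $\co$ by $\partial/\partial\bar\zeta$ and $\partial/\partial\w+\beta_1\partial/\partial w+\beta_2\partial/\partial\zeta$.

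With ${\bf u}(z)=(h(z),k(z))$ as in (\ref{eq80}), the pushforward is determined by its action on the coordinate functions $\zeta,\bar\zeta,w,\w$, giving
\begin{equation*}
d{\bf u}\left(\frac{\partial}{\partial\z}\right)=h_{\z}\frac{\partial}{\partial\zeta}+\overline{h_z}\frac{\partial}{\partial\bar\zeta}+k_{\z}\frac{\partial}{\partial w}+\overline{k_z}\frac{\partial}{\partial\w}.
\end{equation*}
Setting this equal to $\alpha_1\partial/\partial\bar\zeta+\alpha_2(\partial/\partial\w+\beta_1\partial/\partial w+\beta_2\partial/\partial\zeta)$ and matching coefficients on the four basis vectors, the antiholomorphic coefficients identify $\alpha_1=\overline{h_z}$ and $\alpha_2=\overline{k_z}$, after which the remaining two coefficients force $h_{\z}=\beta_2\overline{k_z}$ and $k_{\z}=\beta_1\overline{k_z}$. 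In matrix form this is exactly (\ref{eq82}) with ${\bf Q}$ as in (\ref{eq83}); the vanishing first column of ${\bf Q}$ reflects the fact that the basis vector $\partial/\partial\bar\zeta$ in (\ref{eq59}) contributes nothing to the $\partial/\partial\zeta$ or $\partial/\partial w$ components of $d{\bf u}(\partial/\partial\z)$.

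The only step that needs care is the conjugation bookkeeping in the pushforward: applying $\partial/\partial\z$ to $\bar h=\overline{f^1+if^2}$ yields $\overline{h_z}$ (not $\overline{h_{\z}}$), and likewise for $k$, which is why the antiholomorphic coefficients on the right of the pushforward formula are $\overline{h_z}$ and $\overline{k_z}$. Once that convention is fixed, the form of ${\bf Q}$ is forced with no further computation. An alternative route would be to substitute the real matrix (\ref{eq60}) into (\ref{eq77}), separate real and imaginary parts, solve the resulting $2\times 2$ linear system for $h_{\z},k_{\z}$ in terms of $\overline{h_z},\overline{k_z}$, and then re-express the coefficients using $\beta_1,\beta_2$; this is correct but considerably more tedious, and is exactly the kind of computation the complex eigenspace description is designed to avoid.
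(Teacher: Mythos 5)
Your proof is correct and takes a genuinely different, cleaner route than the paper. The paper diagonalizes $J$ explicitly by writing out the full $4\times4$ matrices $P$, $P^{-1}$, and $D$ (with $P^{-1}$ having fairly involved entries with denominators $\beta_1\bar\beta_1-1$), then multiplies (\ref{eq77}) through by $P^{-1}$, matches all four resulting components, notices the conjugate pairing, and simplifies two complex equations (\ref{eq65}) and (\ref{eq66}) algebraically. You instead use the standard reformulation that (\ref{eq84}) holds if and only if the complexified differential maps $T^{0,1}$ of the source into $T^{0,1}$ of the target (a fact which, since $d{\bf u}$ commutes with conjugation, is equivalent to preserving the $+i$-eigenspace as well, so is an honest equivalence); because the source $T^{0,1}$ is one-dimensional, the whole system collapses to a single vector condition, and because the basis in (\ref{eq59}) is already in echelon form with respect to the $(\partial/\partial\bar\zeta,\partial/\partial\w)$ coordinates, the unknown coefficients $\alpha_1,\alpha_2$ can be read off immediately without inverting anything. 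Your attention to the conjugation bookkeeping ($\partial\bar h/\partial\z=\overline{h_z}$, not $\overline{h_{\z}}$) hits exactly the one place where sign or conjugate errors typically creep in. What the paper's explicit matrix computation buys is the intermediate real equations (\ref{eq85})--(\ref{eq86}) and (\ref{eq65}), (\ref{eq66}), which may be of independent interest and make the passage from (\ref{eq60}) to (\ref{eq59}) self-contained; what your approach buys is brevity and a conceptual explanation of why ${\bf Q}$ has a vanishing first column (the $\partial/\partial\bar\zeta$ eigenvector contributes no $\partial/\partial\zeta$ or $\partial/\partial w$ component), which the paper's componentwise cancellation obscures.
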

\begin{proof}
  The diagonalizing matrix of eigenvectors, its inverse, and the
  diagonalization of $J$ are:
\begin{eqnarray}
    P&=&\left[\begin{array}{cccc}1&1&\beta_2&\bar\beta_2\\i&-i&-i\beta_2&i\bar\beta_2\\0&0&1+\beta_1&1+\bar\beta_1\\0&0&i-i\beta_1&-i+i\bar\beta_1\end{array}\right],\label{eq67}\\
    P^{-1}&=&\frac12\left[\begin{array}{cccc}1&-i&\frac{\bar\beta_2(1-\beta_1)}{\beta_1\bar\beta_1-1}&\frac{i\bar\beta_2(1+\beta_1)}{\beta_1\bar\beta_1-1}\\
    1&i&\frac{\beta_2(1-\bar\beta_1)}{\beta_1\bar\beta_1-1}&\frac{-i\beta_2(1+\bar\beta_1)}{\beta_1\bar\beta_1-1}\\
    0&0&\frac{\bar\beta_1-1}{\beta_1\bar\beta_1-1}&\frac{i(1+\bar\beta_1)}{\beta_1\bar\beta_1-1}\\
    0&0&\frac{\beta_1-1}{\beta_1\bar\beta_1-1}&\frac{-i(1+\beta_1)}{\beta_1\bar\beta_1-1}\end{array}\right],\nonumber\\
    D&=&\left[\begin{array}{cccc}-i&0&0&0\\0&i&0&0\\0&0&-i&0\\0&0&0&i\end{array}\right].\nonumber
\end{eqnarray}
Then, from (\ref{eq77}),
\begin{eqnarray*}
  \frac{\partial\vec f}{\partial
    y}&=&J(\vec f(x,y))\frac{\partial\vec f}{\partial
    x}=PDP^{-1}\frac{\partial\vec f}{\partial x},
\end{eqnarray*}
and this equality of vectors follows:
  \begin{eqnarray*}
    \left[\begin{array}{cccc}-i&0&0&0\\0&i&0&0\\0&0&-i&0\\0&0&0&i\end{array}\right]P^{-1}\left[\begin{array}{c}f^1_x\\f^2_x\\f^3_x\\f^4_x\end{array}\right]&=&P^{-1}\left[\begin{array}{c}f^1_y\\f^2_y\\f^3_y\\f^4_y\end{array}\right]
  \end{eqnarray*}
  \begin{eqnarray}
    &\implies&\left[\begin{array}{c}-if_x^1-f_x^2-i\frac{\bar\beta_2(1-\beta_1)}{\beta_1\bar\beta_1-1}f_x^3+\frac{\bar\beta_2(1+\beta_1)}{\beta_1\bar\beta_1-1}f_x^4\\
    if_x^1-f_x^2+i\frac{\beta_2(1-\bar\beta_1)}{\beta_1\bar\beta_1-1}f_x^3+\frac{\beta_2(1+\bar\beta_1)}{\beta_1\bar\beta_1-1}f_x^4\\
    -i\frac{\bar\beta_1-1}{\beta_1\bar\beta_1-1}f_x^3+\frac{1+\bar\beta_1}{\beta_1\bar\beta_1-1}f_x^4\\
    i\frac{\beta_1-1}{\beta_1\bar\beta_1-1}f_x^3+\frac{1+\beta_1}{\beta_1\bar\beta_1-1}f_x^4\end{array}\right]\label{eq85}\\
  &=&\left[\begin{array}{c}f_y^1-if_y^2+\frac{\bar\beta_2(1-\beta_1)}{\beta_1\bar\beta_1-1}f_y^3+i\frac{\bar\beta_2(1+\beta_1)}{\beta_1\bar\beta_1-1}f_y^4\\
    f_y^1+if_y^2+\frac{\beta_2(1-\bar\beta_1)}{\beta_1\bar\beta_1-1}f_y^3-i\frac{\beta_2(1+\bar\beta_1)}{\beta_1\bar\beta_1-1}f_y^4\\
    \frac{\bar\beta_1-1}{\beta_1\bar\beta_1-1}f_y^3+i\frac{1+\bar\beta_1}{\beta_1\bar\beta_1-1}f_y^4\\
    \frac{\beta_1-1}{\beta_1\bar\beta_1-1}f_y^3-i\frac{1+\beta_1}{\beta_1\bar\beta_1-1}f_y^4\end{array}\right].\label{eq86}
  \end{eqnarray}
  The first and second entries in each vector (\ref{eq85}),
  (\ref{eq86}), are complex conjugates, and the third and fourth
  entries are also conjugates, so for $|\beta_1|\ne1$, the above vector
  equality is equivalent to a system of two complex equations
  (\ref{eq65}), (\ref{eq66}).  Setting the fourth entries of (\ref{eq85}),
  (\ref{eq86}) equal and
  multiplying by $|\beta_1|^2-1$:
\begin{eqnarray}
  i(\beta_1-1)f^3_x+(1+\beta_1)f^4_x&=&(\beta_1-1)f^3_y-i(1+\beta_1)f^4_y\label{eq65}\\
  \implies\frac{\partial}{\partial\z}(f^3+if^4)&=&\beta_1(\vec f(x,y))\cdot\overline{\frac{\partial}{\partial z}(f^3+if^4)}.\nonumber
\end{eqnarray}
 Setting the second entries of (\ref{eq85}), (\ref{eq86}) equal and
 multiplying by $|\beta_1|^2-1$:
\begin{eqnarray}
  &&(\beta_1\bar\beta_1-1)(if^1_x-f^2_x)-i\beta_2(\bar\beta_1-1)f^3_x+\beta_2(1+\bar\beta_1)f^4_x\nonumber\\
  &=&(\beta_1\bar\beta_1-1)(f^1_y+if^2_y)-\beta_2(\bar\beta_1-1)f^3_y-i\beta_2(1+\bar\beta_1)f^4_y\label{eq66}
\end{eqnarray}
\begin{eqnarray}
  \implies\frac{\partial}{\partial\z}(f^1+if^2)&=&\frac1{1-\beta_1\bar\beta_1}\left(-\beta_2\bar\beta_1\frac{\partial}{\partial\z}(f^3+if^4)+\beta_2\overline{\frac{\partial}{\partial z}(f^3+if^4)}\right)\nonumber\\
  &=&\beta_2(\vec f(x,y))\cdot\overline{\frac{\partial}{\partial z}(f^3+if^4)}.\nonumber
\end{eqnarray}
Equation (\ref{eq66}) looks more complicated than (\ref{eq65}), but
there is a simplification using (\ref{eq65}) in the last step.  The
claim that $\bf Q$ as in (\ref{eq82}) is of the form (\ref{eq83})
follows.
\end{proof}
It follows from Lemma \ref{lem5.1} that for ${\bf u}=(h,k)$ as in
(\ref{eq80}), $h$ satisfies a nonlinear, inhomogeneous Cauchy-Riemann
equation
\begin{equation}\label{eq68}
  h_{\z}=\beta_2(h,k)\overline{k_z},
\end{equation}
and $k$ satisfies a Beltrami equation
\begin{equation}\label{eq69}
  k_{\z}=\beta_1(h,k)\overline{k_z}.
\end{equation}

\subsection{The pseudoholomorphically fibered case}

\ 

The results of Sections \ref{sec2} and \ref{sec1} apply to
(\ref{eq68}), so at this point we consider the special case where the
complex structure in normal coordinates satisfies $$\beta_1\equiv0.$$
We also drop the assumption that $\beta_2(\zeta,w)$ is smooth.  The
matrix (\ref{eq60}) for the complex structure operator $J(\zeta,w)$
is:
\begin{eqnarray}
    J(\zeta,w)&=&\left[\begin{array}{cccc}0&-1&a_1&a_2\\1&0&a_2&-a_1\\0&0&0&-1\\0&0&1&0\end{array}\right]\nonumber\\ &=&\left[\begin{array}{cccc}0&-1&2\ip(\beta_2)&-2\rp(\beta_2)\\1&0&-2\rp(\beta_2)&-2\ip(\beta_2)\\0&0&0&-1\\0&0&1&0\end{array}\right].\label{eq74}
\end{eqnarray}
The projection $(\zeta,w)\mapsto w$ is a pseudoholomorphic map
$D_{\vec0,\rho}\times D_{\vec0,\rho}\to D_{\vec0,\rho}$; the fibers
are the $J$-holomorphic curves $(z,c)$ --- this is called the
``pseudoholomorphically fibered'' case by \cite{st1} \S3.

Equations (\ref{eq68}) and (\ref{eq69}), for a parametric map ${\bf
  u}$ as in (\ref{eq80}), become:
\begin{eqnarray}
  h_{\z}&=&\beta_2(h(z),k(z))\overline{k_z},\label{eq73}\\
  k_{\z}&\equiv&0.\nonumber
\end{eqnarray}
So (\ref{eq69}) reduces to the homogeneous Cauchy-Riemann equation,
and $h$ satisfies a nonlinear, inhomogeneous Cauchy-Riemann equation.

The previously stated differentiability assumption in the definition
of $J$-holomorphic curve has been weakened by some authors (e.g.,
\cite{is}) to ${\bf u}\in\cc^0\cap W^{1,2}$, when working with lower
regularity ${\bf u}$ and $J$.  However, for this special case where
${\bf Q}$ is strictly upper-triangular, the $z$-derivative of $h$ does
not appear, and $k$ is already holomorphic by Proposition
\ref{prop2.4}, so as in Section \ref{sec2}, one may consider solutions
of the system without assuming $W^{1,2}$.  More precisely, suppose
${\bf u}=(h,k)$ is a parametric map $\Omega\to\co^2$, where $h$ and
$k$ are continuous, satisfy $(*)$ on $\Omega$, and satisfy the system
(\ref{eq73}) almost everywhere in $\Omega$.  Then $k$ is holomorphic,
and if $\beta_2$ is continuous, then Theorem \ref{lem4.3} and
Corollary \ref{thm4.2} apply to $h$, so the $W^{1,2}$ property follows
as a conclusion.  Further, it follows immediately from (\ref{eq73})
and Liouville's Theorem that for any $\beta_2$, if ${\bf
u}:\co\to\co^2$ has bounded image then it is constant.

If $\beta_2(\zeta,w)$ has a factorization of the separable form
$f(\zeta)\frac{\partial v(w)}{\partial \w}$, then using the chain
rule,
$$\frac{\partial}{\partial\z}(v(k(z)))=v_w(k(z))k_{\z}+v_{\w}(k(z))\overline{k_z},$$
(\ref{eq73}) can be re-written as:
\begin{eqnarray}
  h_{\z}&=&f(h(z))v_{\w}(k(z))\overline{k_z}=f(h(z))\frac{\partial}{\partial\z}(v(k(z))).\label{eq79}
\end{eqnarray}

\begin{example}\label{ex4.2}
  Consider the function $\beta_2(\zeta,w)=\zeta^2\bar w$ and the
  corresponding almost complex structure $J$ (\ref{eq74}) on $\co^2$.
  The system (\ref{eq73}) for ${\bf u}:\Omega\to\co^2$ becomes:
  \begin{eqnarray}
    h_{\z}&=&\beta_2(h(z),k(z))\overline{k_z}=h^2\overline{k}\overline{k_z}\label{eq19}\\
    k_{\z}&\equiv&0,\nonumber
  \end{eqnarray}
  so a continuous ${\bf u}$ satisfying $(*)$ and (\ref{eq19}) almost
  everywhere on $\Omega$ must be real analytic by Corollary
  \ref{thm4.2}.  In fact, this $J$ defines an integrable almost
  complex structure on $\co^2$, so we do not expect the local
  qualitative behavior of $J$-holomorphic curves to be different from
  standard holomorphic curves.  However, the results of Section
  \ref{sec1} allow us to explicitly compute local parametric formulas
  for all the $J$-holomorphic curves in this coordinate system.  As in
  (\ref{eq79}), Equation (\ref{eq19}) can be re-written
  $$h_{\z}=h^2\overline{\frac{\partial}{\partial
    z}(\frac12k^2)}=h^2\frac{\partial}{\partial\z}\left(\overline{\frac12k^2}\right),$$
    so this is a separable equation, to which Theorem \ref{thm2.4} and
    Theorem \ref{thm2.9} apply, with
    $G(z)=\overline{\frac12(k(z))^2}$, $f(w)=w^2$, and
    $F(w)=-\frac1w$.  If $z_0$ is any point in $\Omega_1$ with
    $h(z_0)=\zeta_0\ne0$, then by the constructions in the Proofs of
    Lemma \ref{thm2.2} and Theorem \ref{thm2.4}, for $z$ near $z_0$
  $$h(z)=\frac{-1}{\overline{\frac12(k(z))^2}-\overline{\frac12(k(z_0))^2}+C(z)-\frac1{\zeta_0}},$$
  for some holomorphic $C$ with $C(z_0)=0$.

  If $h(z_0)=0$ (so ${\bf u}=(h,k)$ meets the $w$-axis), then $h$ is
  either $\equiv0$ (${\bf u}=(0,k(z))$ is $J$-holomorphic), or has the
  following form, by Theorem \ref{thm2.9}:
  \begin{equation}\label{eq22}
    h(z)=\frac{-(z-z_0)^m}{\phi(z)+(z-z_0)^m\overline{\frac12(k(z))^2}}.
  \end{equation}
  In this case, choosing any $m\ge1$, holomorphic $\phi$ with
  $\phi(z_0)\ne0$, and holomorphic $k(z)$ gives an example of a
  solution $h$.
\end{example}
\begin{example}\label{ex4.3}
  Consider the function
  $$\beta_2(\zeta,w)=\frac{\partial V}{\partial\w}(w),$$ where $V$ is
  the function constructed in Example \ref{ex4.4}, depending on $w$.
  The corresponding almost complex structure $J$ (\ref{eq74}) is
  continuous on $\co^2$, and equal to the standard complex structure
  $J_0$ outside a neighborhood of the origin.  The system (\ref{eq73})
  for ${\bf u}:\Omega\to\co^2$ becomes, as in (\ref{eq79}):
  \begin{eqnarray}
    h_{\z}&=&\beta_2(h(z),k(z))\overline{k_z}=\frac{\partial V}{\partial\w}(k(z))\overline{k_z}=\frac{\partial}{\partial\z}(V(k(z)))\label{eq76}\\
    k_{\z}&\equiv&0.\nonumber
  \end{eqnarray}
  If ${\bf u}:\Omega\to\co^2$ is continuous, satisfies $(*)$, and
  satisfies (\ref{eq76}) almost everywhere on $\Omega$, then $k$ is
  holomorphic, and by Theorem \ref{lem4.3}, for any $R\Subset\Omega$
  and $0<\alpha<1$, $h|_R\in W^{1,2}(R)\cap\cc^{0,\alpha}(R)$.  By
  Lemma \ref{lem2.3},
  \begin{equation}\nonumber
    h(z)=V(k(z))+C(z)
  \end{equation}
  for some holomorphic function $C$.  One example of such a solution
  ${\bf u}:\co\to\co^2$ is $(h,k)=(V(z),z)$.

  This Example shows that there exists a continuous almost complex
  structure $J$, admitting a differentiable $J$-holomorphic curve
  ${\bf u}=(h,k)$ which is a solution of the matrix equation
  (\ref{eq82}) such that both LHS and RHS of (\ref{eq82}) are defined
  everywhere and continuous (after the matrix multiplication), but
  ${\bf u}$ is not $\cc^1$ because the LHS and RHS of (\ref{eq77}) are
  not locally bounded.
\end{example}

\subsubsection*{Acknowledgments}
  The authors acknowledge the helpful comments of an anonymous
  referee, who suggested essentially all of Subsection
  \ref{subsec4.2}.  The first author was supported in part by a 2015
  sabbatical semester at IPFW.  The third author was supported in part
  by National Science Foundation research grant DMS-1265330.


\begin{thebibliography}{Conway}

\bibitem[AIM]{aim} {\sc K.\ Astala}, {\sc T.\ Iwaniec}, and {\sc G.\
  Martin}, {\sl Elliptic Partial Differential Equations and
  Quasiconformal Mappings in the Plane}, PMS {\bf48}, Princeton, 2009.
  MR 2472875.

\bibitem[BBC]{bbc} {\sc L.\ Baratchart}, {\sc A.\ Borichev}, and {\sc
  S.\ Chaabi}, {\sl Pseudo-holomorphic functions at the critical
  exponent}, preprint. {\tt arXiv:1309.3079}

\bibitem[CP]{cp} {\sc A.\ Coffman} and {\sc Y.\ Pan}, {\sl Smooth
  counterexamples to strong unique continuation for a Beltrami system
  in $\mathbb{C}^2$}, Communications in Partial Differential Equations
  (12) {\bf37} (2012), 2228--2244.  MR 3005542.

\bibitem[C$_1$]{cohenthesis} {\sc P.\ Cohen}, {\sl Topics in the
  Theory of Uniqueness of Trigonometrical Series}, dissertation,
  University of Chicago, 1958.  MR 2611474.

\bibitem[C$_2$]{cohen} {\sc P.\ Cohen}, {\sl On Green's theorem},
  Proc.\ AMS {\bf10} (1959), 109--112.  MR 0104249.

\bibitem[Conway]{c} {\sc J.\ B.\ Conway}, {\sl Functions of One
  Complex Variable}, 2nd ed., GTM {\bf11}, Springer, 1978.  MR
  0503901.

\bibitem[CV]{cv} {\sc J.\ Cuf\'\i} and {\sc J.\ Verdera}, {\sl A
  general form of Green's Formula and the Cauchy Integral Theorem},
  Proc.\ AMS.\ (5) {\bf143} (2015), 2091--2102.  MR 3314118.

\bibitem[GT]{gt} {\sc D.\ Gilbarg} and {\sc N.\ Trudinger}, {\sl
  Elliptic Partial Differential Equations of Second Order}, Springer
  CIM, 2001.  MR 1814364.

\bibitem[GR]{gr} {\sc X.\ Gong} and {\sc J.-P.\ Rosay}, {\sl
  Differential inequalities of continuous functions and removing
  singularities of Rado type for $J$-holomorphic maps},
  Math.\ Scand.\ (2) {\bf101} (2007), 293--319.  MR 2379291.

\bibitem[GM]{gm} {\sc J.\ Gray} and {\sc S.\ Morris}, {\sl When is a
  function that satisfies the Cauchy-Riemann equations analytic?}, The
  American Mathematical Monthly (4) {\bf85} (Apr., 1978), 246--256.
  MR 0470179.

\bibitem[H]{hille2} {\sc E.\ Hille}, {\sl Ordinary Differential
  Equations in the Complex Domain}, Pure and Applied Mathematics,
  Wiley-Interscience, 1976.  MR 0499382.

\bibitem[IS$_1$]{is1} {\sc S. Ivashkovich} and {\sc V.\ Shevchishin},
  {\sl Structure of the moduli space in a neighborhood of a cusp-curve
  and meromorphic hulls}, Inventiones Math.\ {\bf136} (1999),
  571--602.  MR 1695206.

\bibitem[IS$_2$]{is} {\sc S. Ivashkovich} and {\sc V.\ Shevchishin}, {\sl
  Local properties of $J$-complex curves in Lipschitz-continuous
  structures}, Math.\ Z.\ (3--4) {\bf268} (2011), 1159--1210.  MR
  2818746.

\bibitem[M]{m} {\sc C.\ Morrey, Jr.}, {\sl On the analyticity of the
  solutions of analytic non-linear elliptic systems of partial
  differential equations.  Part I.  Analyticity in the interior},
  American Journal of Mathematics (1) {\bf80} (1958), 198--218.
  MR 0106336.

\bibitem[N]{n} {\sc R.\ Narasimhan}, {\sl Complex Analysis in One
  Variable}, Birkh\"auser, 1985.  MR 0781130.

\bibitem[R]{rosay} {\sc J.-P.\ Rosay}, {\sl Uniqueness in rough almost
  complex structures, and differential inequalities}, Ann.\ Inst.\
  Fourier (6) {\bf60} (2010), 2261--2273.  MR 2791657.

\bibitem[ST]{st1} {\sc B.\ Siebert} and {\sc G.\ Tian}, {\sl Lectures
  on pseudo-holomorphic curves and the symplectic isotopy problem}, in
  {\sl Symplectic $4$-Manifolds and Algebraic Surfaces}, LNM
  {\bf1939}, Springer, 2008.  MR 2463700.

\bibitem[S]{sikorav} {\sc J.-C.\ Sikorav}, {\sl Some properties of
  holomorphic curves in almost complex manifolds}, Chapter 3.V.\ of
  {\sl Holomorphic Curves in Symplectic Geometry}, {\sc M.\ Audin} and
  {\sc J.\ Lafontaine}, eds., PM {\bf117}, Birkh\"auser, 1994.  MR
  1274929.

\bibitem[T]{taubes} {\sc C.\ Taubes}, {\sl SW$\implies$Gr: From the
  Seiberg-Witten equations to pseudo-holomorphic curves}, J.\ AMS (3)
  {\bf9} (1996), 845--918.  MR 1362874.

\end{thebibliography}
\end{document}